\newcommand{\R}{{\mathbb {R}}}
\newcommand{\N}{{\mathbb N}}
\newcommand{\Z}{{\mathbb Z}}
\newcommand{\C}{{\mathbb C}}
\newcommand{\T}{{\mathbb T}}
\theoremstyle{plain} 
\newtheorem{lemma}[equation]{Lemma} 
\newtheorem{proposition}[equation]{Proposition} 
\newtheorem{theorem}[equation]{Theorem}
\theoremstyle{definition}
\newtheorem{definition}[equation]{Definition} 
\theoremstyle{remark}
\numberwithin{equation}{section}
\title[] {Sparse Bounds for Discrete Maximal Functions associated with Birch-Magyar averages}
\author[Bhojak]{Ankit Bhojak}
\address{Ankit Bhojak\\
	Department of Mathematics\\
	Indian Institute of Science Education and Research Bhopal\\
	Bhopal-462066, India.}
\email{ankitb@iiserb.ac.in}
\author[Choudhary]{Surjeet Singh Choudhary}
\address{Surjeet Singh Choudhary\\
	Department of Mathematics\\
	Indian Institute of Science Education and Research Mohali\\
	Mohali-140306, India.}
\email{surjeet19@iisermohali.ac.in}
\author[Samanta]{Siddhartha Samanta}
\address{Siddhartha Samanta\\
	Department of Mathematics\\
	Indian Institute of Science Education and Research Bhopal\\
	Bhopal-462066, India.}
\email{siddhartha21@iiserb.ac.in}
\author[Shrivastava]{Saurabh Shrivastava}
\address{Saurabh Shrivastava\\
	Department of Mathematics\\
	Indian Institute of Science Education and Research Bhopal\\
	Bhopal-462066, India.}
\email{saurabhk@iiserb.ac.in}
\subjclass[2020]{Primary 11D72, 42B25, Secondary 11P55, 11L05}
\keywords{Discrete maximal function, Birch-Magyar averages, Sparse bounds, Sparse sequences}
\begin{document}
	\begin{abstract}
	In this article, we study discrete maximal function associated with the Birch-Magyar averages over sparse sequences. We establish sparse domination principle for such operators. As a consequence, we obtain $\ell^p$-estimates for such discrete maximal function over sparse sequences for all $p>1$. The proof of sparse bounds is based on scale-free $\ell^p-$improving estimates for the single scale Birch-Magyar averages.
	\end{abstract}
	\maketitle
	\section{Introduction}\label{sec:intro}
	Let $\sigma$ be the normalized Lebesgue measure on the unit sphere $\mathbb{S}^{n-1}\subset\R^n, n\geq 2$. For a continuous function $f:\R^n\to\C$, the spherical averages of $f$ is defined by   
	\[\mathcal{A}_{t}f(x)=\int_{\mathbb{S}^{n-1}}f(x-t\theta)d \sigma (\theta)\tag{1.1}, \;t>0.\label{noteq111}\] 
	For $n\geq3$, the full spherical maximal function
	\[\mathcal{A}_{\star}f(x)= \sup \limits_{t>0}|\mathcal{A}_{t}f(x)|\tag{1.2}\label{noteq112}\] was shown to be bounded on $L^p(\R^n)$ by Stein \cite{Stein1} if and only if $p>\frac{n}{n-1}$. Later,  Bourgain \cite{Bourgain1} proved that the maximal operator $\mathcal{A}_{\star}$ is bounded on $L^{p}(\R^2)$ if and only if $p>2$. Further, the lacunary version of the spherical maximal function was considered. Recall that a sequence $\{\lambda_{k}\}_{k=1}^{\infty}$ is called lacunary if there exists $c>1$ such that $\frac{\lambda_{k+1}}{\lambda_{k}}\geq c,$ for all $k\geq1.$ The lacunary spherical maximal function corresponding to a lacunary sequence $\{\lambda_{k}\}_{k=1}^{\infty}$ is defined as 
	\[\mathcal{A}_{lac}f(x)= \sup \limits_{k}|\mathcal{A}_{\lambda_{k}}f(x)|.\tag{1.3}\label{noteq113}\]
	The maximal function $\mathcal{A}_{lac}$ is known to be bounded on $L^{p}(\mathbb{R}^{n})$ for $1<p<\infty$ and $n\geq 2$ by the works of Calder\'on \cite{Calderon}, Coifman-Weiss \cite{CoifmanWeiss}, and Duoandikoetxea-Rubio de Francia \cite{DuoanRubio}.

	\subsection{Discrete analogues of spherical maximal functions} In this article, we are concerned with the study of discrete analogues of spherical averages and corresponding maximal function. For a function $f:\mathbb{Z}^{n} \rightarrow \mathbb{C}$ in $\ell^{p}(\mathbb{Z}^{n})$, $1\leq p\leq \infty$ and $\lambda \geq 1 $, the discrete analogue of the euclidean spherical average \eqref{noteq111} is defined by
	\[M_{\lambda}f(x) = \frac{1}{r(\lambda)}\sum_{\substack{y\in\mathbb{Z}^{n}\\|y| = \lambda}}f(x-y),\]
	where $r(\lambda) = |\{ y \in \mathbb{Z}^{n} : |y| = \lambda \}|$.
	Similar to the continuous case as in \eqref{noteq112}) and \eqref{noteq113}, the full discrete spherical maximal function and the lacunary discrete maximal function are defined as below 
	\[M_{\star}f(x) = \sup\limits_{ \lambda \geq 1 }|M_{\lambda}f(x)|\]
	and \[M_{lac}f(x) = \sup\limits_{ k \geq 1 }|M_{\lambda_{k}}f(x)|,\]    
	where $\{\lambda_k\}$ is a lacunary sequence.
	Magyar-Stein-Wainger \cite{magyar} proved that the maximal operator $M_\star$ is bounded on $\ell^p(\Z^n)$ in the sharp range $p>\frac{n}{n-2}$ and $n\geq 5$; subsequently Ionescu \cite{Ionescu} established restricted weak-type estimate for $M_{\star}$ at the endpoint $\frac{n}{n-2}$. Note that unlike the continuous counterpart, the discerte lacunary maximal operator $M_{lac}$ fails to be bounded on $\ell^p(\Z^n)$ for values of $p$ near $1$. More precisely, a counterexample of Zienkiewicz provides construction of lacunary sequence for which the operator $M_{lac}$ is not bounded on $\ell^p(\Z^n)$ for $p<\frac{n}{n-1}$. We refer the interested reader to \cite[section 7]{Cook&Hughes1} for the explicit counstruction of the counterexample. Hughes~\cite{Hughes_Thediscretesphericalaveragesoverafamilyofsparsesequences}  showed that $M_{lac}$ is bounded on $\ell^p(\Z^n)$ for $p>\frac{n-1}{n-2}$ and $n\geq 5$. This was further improved by Kesler-Lacey-Mena \cite{Lacey3} for the range $p>\frac{n-2}{n-3}$ and $n\geq 5$. However, the question of $\ell^p(\Z^n)$-estimates of $M_{lac}$ is still open in the range $\frac{n}{n-1}\leq p\leq\frac{n-2}{n-3}$. It is interesting to know for which sequence of radii $\{\lambda_{k}\}_{k=1}^{\infty}$, the maximal function 
	$\mathcal{M}(f)(x)=\sup\limits_{ k \geq 1 }|M_{\lambda_{k}}f(y)|$ extends to a bounded operator on $\ell^p(\Z^n)$ for all $p>1$. This question was addressed by Cook \cite{Cook2} for a class of sequences, known as sparse sequences, defined as follows.
\begin{definition}\label{Unique C-1-2}
		We call a sequence $\{\lambda_{k}\}_{k=1}^{\infty}$ sparse, if there exists $\mu_k$ such that $\lambda_{k}=\mu_{k}!$, with
		\[\lim_{k\rightarrow\infty}\frac{\log \mu_k}{\log k}=\infty.\]
\end{definition} 
   \sloppy Cook \cite{Cook2} showed that the maximal function $\mathcal{M}(f)(x)=\sup\limits_{ k \geq 1 }|M_{\lambda_{k}}f(y)|$ associated with the sparse sequence $\lambda_{k}=2^{k}!, k\in \N$, is bounded on $\ell^{p}(\mathbb{Z}^{n})$ for $p>1$ and $n\geq 5$. Later, Kesler-Lacey-Mena \cite{Lacey3} generalized this result further by showing that the operator $\mathcal{M}$ 
    is bounded on $\ell^p(\Z^n)$ for all $p>1$ and for any sparse sequence $\{\lambda_{k}\}_{k=1}^{\infty}$.
     
   \subsection{Birch-Magyar averages and associated maximal functions} The goal of this article is to extend the results mentioned above regarding the $\ell^{p}(\mathbb{Z}^{n})$-boundedness of maximal functions associated with sparse sequences in the setting of Birch-Magyar averages. These discrete averages are defined over more general algebraic hypersurfaces. This framework was developed by Birch~\cite{Birch} and Magyar~\cite{Magyar2}.  Let us first briefly recall the notion of Birch-Magyar averages. Note that a homogeneous polynomial with integral coefficients is called an integral form. Let $\mathfrak{R}(x)$ be an integral form of degree $d>1$  in $n$ variables. The Birch rank of $\mathfrak{R}$, denoted by $\mathcal{B}(\mathfrak{R}),$ is defined as
\[\mathcal{B}(\mathfrak{R})= n- \text{dim}V(\mathfrak{R}),\]
where
 \[V(\mathfrak{R})=\{z\in\mathbb{C}^{n}:\partial_{z_{1}}\mathfrak{R}(z)=\partial_{z_{2}}\mathfrak{R}(z)=\cdots=\partial_{z_{n}}\mathfrak{R}(z)=0\}.\] 
We use the following standard notation 
$$c_{\mathfrak{R}}=\frac{\mathcal{B}(\mathfrak{R})}{(d-1)2^{d-1}}~~\text{and}~~ \eta_{\mathfrak{R}}=\frac{1}{6d}\left(\frac{c_{\mathfrak{R}}}{2}-1\right).$$
Note that  $d\eta_{\mathfrak{R}}<c_{\mathfrak{R}}-2$. For a given compactly supported smooth function $\phi$ with $0\leq\phi\leq 1$, an integral form $\mathfrak{R}(x)$ of degree $d>1$ in $n$ variables is called $\phi-$regular if there exists a non-singular solution of $\mathfrak{R}(x)=1$ in the support of $\phi$ and Birch rank of $\mathfrak{R}$ is greater than $(d-1)2^{d}$. We note that $c_{\mathfrak{R}}>2$ if $\mathfrak{R}$ is $\phi-$regular.

Consider the counting function  $r_{\mathfrak{R},\phi}(\lambda)=\sum\limits_{\mathfrak{R}(x)=\lambda}\phi\left(\frac{x}{\lambda^{\frac{1}{d}}} \right)$. It is known, see~ \cites{Birch, Magyar2}, that for a $\phi-$regular form $\mathfrak{R}$ there is an infinite arithmetic progression $\Lambda$ of regular values $\lambda\in \Lambda$ satisfying $r_{\mathfrak{R},\phi}(\lambda)\approx\lambda^{\frac{n}{d}-1}$. Henceforth, for a $\phi-$regular form $\mathfrak{R}$, let $\Lambda$ denote such an infinite  arithmetic progression. We refer the readers to \cite{Birch}, \cite{Magyar2}, and \cite{Cook&Hughes1} for more details on $\phi-$regular forms. For a $\phi-$regular form $\mathfrak{R}(x)$ of degree $d>1$  in $n$ variables and for $\lambda\in \Lambda$, the Birch-Magyar average of $f$ is defined by 
 \[M_{\lambda,\phi}^{\mathfrak{R}}f(x)=\frac{1}{r_{\mathfrak{R},\phi}(\lambda)}\sum_{y:\mathfrak{R}(y)=\lambda}\phi\left(\frac{y}{\lambda^{\frac{1}{d}}} \right)f(x-y).\tag{1.4}\label{noteq119}\]
Note that the definition above encompasses the $k-$spherical averages studied in \cite{Magyar1997}, \cite{Hughes2017}, \cite{KeHughes2017}, and \cite{ACHughes2018}, see Cook and Hughes~\cite{Cook&Hughes1} for details. Magyar \cite{Magyar2} proved that the discrete maximal function associated with Birch-Magyar averages
 \[M^{\mathfrak{R},\phi}_{*}f(x)=\sup_{\lambda\in\Lambda}|M_{\lambda,\phi}^{\mathfrak{R}}f(x)|,\]
 is bounded on $\ell^p(\Z^n)$ when $p=2$. Further, the operator $M^{\mathfrak{R},\phi}_{*}$ is bounded on $\ell^p(\Z^n)$ for $p\leq\frac{n}{n-d}$. Later, Hughes~\cite[Appendix A]{Hughes} showed that the operator $M^{\mathfrak{R},\phi}_{*}$ is not bounded on $\ell^p(\Z^n)$ for  $p>\text{max}\left\{\frac{c_{\mathfrak{R}}}{c_{\mathfrak{R}}-1},\frac{2(\eta_{\mathfrak{R}}+1)}{2\eta_{\mathfrak{R}}+1}\right\}$. However,  the sharp range of $p$ for which the operator $M^{\mathfrak{R},\phi}_{*}$ is bounded on $\ell^{p}(\Z^n)$ is not known. For a lacunary sequence $\{\lambda_{k}\}_{k=1}^{\infty}=\mathbb{L}\subset \Lambda$, Cook-Hughes \cite{Cook&Hughes1} proved that the lacunary maximal function $M_{lac}^{\mathfrak{R},\phi}f(x)=\sup\limits_{\lambda_{k}\in\mathbb{L}}|M_{\lambda_{k},\phi}^{\mathfrak{R}}f(x)|$ is bounded on $\ell^{p}(\mathbb{Z}^{n})$ for $p>\frac{2c_{\mathfrak{R}}-2}{2c_{\mathfrak{R}}-3}$, provided the dimension $n$ is sufficiently large. Moreover, for the range $1< p<\frac{n}{n-1}$, they showed that there exists a lacunary sequence $\{\lambda_{k}\}_{k=1}^{\infty}$ such that the corresponding maximal operator  $M_{lac}^{\mathfrak{R},\phi}$ is not bounded on $\ell^{p}(\mathbb{Z}^{n})$. 
 
 We address the question of $\ell^p-$boundedness of the maximal function associated with Birch-Magyar averages for sparse sequences defined in \Cref{Unique C-1-2}. In particular, we prove the following result.
	\begin{theorem}\label{Unique C-1-10}
		Let $\mathfrak{R}$ be a $\phi-$regular integral form in $n$ variables of degree $d>1$ and $\{\lambda_k\}_{k\in\N}$ be a sparse sequence contained in $\Lambda$, the set of regular values. Then the maximal function $\mathcal{M}^{\mathfrak{R},\phi}f=\sup\limits_{\lambda_{k}}|M_{\lambda_{k},\phi}^{\mathfrak{R}}f(x)|$ is bounded on $\ell^{p}(\mathbb{Z}^{n})$ for all $p>1$.
	\end{theorem}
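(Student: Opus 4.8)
The plan is to deduce the theorem from a sparse domination principle for $\mathcal{M}^{\mathfrak{R},\phi}$, which in turn rests on a scale-free $\ell^{p}$-improving estimate for the single-scale Birch--Magyar averages $M^{\mathfrak{R}}_{\lambda,\phi}$. As a preliminary, apply Magyar's Hardy--Littlewood circle method decomposition to the Fourier multiplier of $M^{\mathfrak{R}}_{\lambda,\phi}$ and write $M^{\mathfrak{R}}_{\lambda,\phi}=\mathcal{L}_{\lambda}+\mathcal{E}_{\lambda}$, where $\mathcal{L}_{\lambda}$ is the major-arc (main) term — a finite sum $\sum_{q\leq \lambda^{\eta_{\mathfrak{R}}}}\mathcal{L}^{q}_{\lambda}$ over denominators $q$, each piece $\mathcal{L}^{q}_{\lambda}$ being a $q$-periodic Gauss-sum convolution composed with a transferred, frequency-truncated continuous average over the non-singular part of $\{\mathfrak{R}=1\}$ dilated to scale $\lambda^{1/d}$ — and $\mathcal{E}_{\lambda}$ is the minor-arc error with $\|\mathcal{E}_{\lambda}\|_{\ell^{2}\to\ell^{2}}\lesssim\lambda^{-\delta}$ for some $\delta=\delta(\mathfrak{R})>0$ (this uses $c_{\mathfrak{R}}>2$, equivalently the Birch rank bound). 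The contribution $\sup_{k}|\mathcal{E}_{\lambda_{k}}f|$ is easy: $\mathcal{E}_{\lambda}$ is uniformly bounded on $\ell^{1}$ and $\ell^{\infty}$, so by interpolation $\|\mathcal{E}_{\lambda}\|_{\ell^{p}\to\ell^{p}}\lesssim\lambda^{-\delta_{p}}$ with $\delta_{p}>0$ for every $1<p<\infty$, and hence $\big\|\sup_{k}|\mathcal{E}_{\lambda_{k}}f|\big\|_{\ell^{p}}\le\big(\sum_{k}\|\mathcal{E}_{\lambda_{k}}f\|_{\ell^{p}}^{\min(p,2)}\big)^{1/\min(p,2)}\lesssim\|f\|_{\ell^{p}}$, the series converging because $\lambda_{k}=\mu_{k}!\to\infty$ very rapidly (indeed this step needs only $\sum_{k}\lambda_{k}^{-\varepsilon}<\infty$). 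It therefore remains to bound $\sup_{k}|\mathcal{L}_{\lambda_{k}}f|$.

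For the main term, the key single-scale input — proved by the circle method uniformly in $\lambda$ — is a \emph{scale-free $\ell^{p}$-improving estimate} for $M^{\mathfrak{R}}_{\lambda,\phi}$: there is a nonempty open region of exponent pairs $(p,q)$ such that for every regular value $\lambda$, every cube $Q$ with side length comparable to $\lambda^{1/d}$, and all nonnegative $f,g$,
\[
\big\langle M^{\mathfrak{R}}_{\lambda,\phi}(f\mathbf{1}_{Q}),\, g\mathbf{1}_{Q}\big\rangle \;\lesssim\; |Q|\,\langle f\rangle_{p,CQ}\,\langle g\rangle_{q,CQ},
\]
with implied constant independent of $\lambda$ and $Q$ (here $\langle h\rangle_{p,Q}=(|Q|^{-1}\sum_{x\in Q}|h|^{p})^{1/p}$ and $C$ is absolute), together with a companion improving bound for each piece $\mathcal{L}^{q}_{\lambda}$ carrying an additional power gain in $q$. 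On the major arcs this comes from the $L^{p}$-improving inequality for the continuous average over the non-singular locus of $\{\mathfrak{R}=1\}$ combined with Birch/Weyl bounds for the complete exponential sums of $\mathfrak{R}$ (which both require $\mathcal{B}(\mathfrak{R})$ large and let one sum over $q$); on the minor arcs one uses the $\ell^{2}$ decay interpolated against the trivial $\ell^{1},\ell^{\infty}$ bounds, as above.

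Granting these, I would run the standard stopping-time / Calder\'on--Zygmund recursion to produce sparse bounds for $\sup_{k}|\mathcal{L}_{\lambda_{k}}f|$. Here the hypothesis that $\{\lambda_{k}\}$ is sparse (\Cref{Unique C-1-2}), rather than merely lacunary, is essential, and two features of sparse sequences are used. First, for a cube of side $\ell$ only the scales $\lambda_{k}\lesssim\ell^{d}$ are ``local'', and $\log\mu_{k}/\log k\to\infty$ forces $\#\{k:\lambda_{k}\lesssim\ell^{d}\}$ to grow slower than any power of $\log\ell$, so that, wherever the single-scale estimate carries a genuine power gain $\lambda^{-\kappa}$ in the off-diagonal regime, the sum $\sum_{k}\lambda_{k}^{-\kappa}$ over local scales converges and contributes no loss. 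Second — and this is what rules out the Zienkiewicz-type obstruction noted in the introduction for generic lacunary sequences — the super-exponential growth $\lambda_{k+1}/\lambda_{k}\to\infty$ makes the level sets $\{\mathfrak{R}=\lambda_{k}\}$ arithmetically ``independent'' across $k$ (no nesting or coherence of the arithmetic data), so the fixed-denominator pieces $\sup_{k}|\mathcal{L}^{q}_{\lambda_{k}}f|$ reduce, via Magyar--Stein--Wainger transference, to a continuous lacunary-type Birch maximal function — bounded on $L^{p}(\R^{n})$ for all $p>1$ given the curvature available from the large Birch rank — and their $q$-sum converges. Packaging all scales and denominators, one obtains, for every $\varepsilon>0$, a sparse form estimate with exponents $(p_{\varepsilon},q_{\varepsilon})$ whose associated intervals, as $\varepsilon\downarrow0$, exhaust $(1,\infty)$.

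Finally, a sparse form bound with exponents $(p,q)$ implies $\|\mathcal{M}^{\mathfrak{R},\phi}\|_{\ell^{r}\to\ell^{r}}<\infty$ for all $r$ in the corresponding open interval, and the union of these over the family of sparse bounds just described is all of $(1,\infty)$; this gives the theorem. I expect the main obstacle to be the scale-free $\ell^{p}$-improving estimate of the second paragraph: it requires a quantitatively uniform-in-$\lambda$ execution of Birch's circle method, exponential-sum bounds calibrated to $\mathcal{B}(\mathfrak{R})$, and the continuous improving/smoothing inequalities for $\{\mathfrak{R}=1\}$, with the off-diagonal power gain tracked explicitly; by comparison, once sparseness is fed in, the stopping-time step and the endpoint reductions are relatively routine.
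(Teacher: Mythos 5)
Your overall architecture coincides with the paper's route (circle-method decomposition, scale-free single-scale estimates as in \Cref{Unique C-4-1}, a stopping-time sparse recursion, transference to a continuous maximal operator, and finally $\ell^p$ bounds from sparse bounds), and your treatment of the minor-arc error $\sup_k|\mathcal{E}_{\lambda_k}f|$ by interpolating the uniform $\ell^1$/$\ell^\infty$ bounds against the $\ell^2$ decay and summing in $k$ is fine for the $\ell^p$ statement. The genuine gap is in your treatment of the major-arc main term. You assert that super-exponential growth $\lambda_{k+1}/\lambda_k\to\infty$ makes the sequence ``arithmetically independent'', so that each fixed-denominator piece $\sup_k|\mathcal{L}^q_{\lambda_k}f|$ transfers to a continuous lacunary-type maximal operator with constants summable in $q$. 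Growth rate says nothing about the residues $\lambda_k \bmod q$, and it is precisely these residues, entering through the factors $e_q(-a\lambda_k)$ in the major-arc multiplier, that obstruct $\ell^p$ bounds near $p=1$: the Zienkiewicz construction (and its Birch analogue in Cook--Hughes) uses sequences that may grow as fast as one likes but sit in adversarial residue classes, and for these the associated maximal operator fails on $\ell^p$ for $p<\frac{n}{n-1}$. So the step ``fast growth $\Rightarrow$ the fixed-$q$ pieces reduce to a continuous maximal function'' is unproved and false as stated.

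What the paper actually exploits is the factorial structure in \Cref{Unique C-1-2}: since $\lambda_k=\mu_k!$, one has $q\mid\lambda_k$, hence $e_q(-a\lambda_k)=1$, for every $q\le N$ as soon as $\mu_k>N^d$; and the condition $\log\mu_k/\log k\to\infty$ guarantees $\mu_{[N^{\epsilon}]}>N^d$ for large $N$, so only about $N^{\epsilon}$ scales escape this regime and they are paid for by the single-scale bound of \Cref{Unique C-4-1} at cost $N^{\epsilon}$ (this threshold pairing, not a ``slower than any power of $\log\ell$'' count, is where sparseness is used). Once the oscillation $e_q(-a\lambda_k)$ is gone, the arithmetic part of the main term collapses to a single convolution operator $S$ at the fixed modulus $L=N!$, whose $\ell^1$-boundedness rests on the solution count for $\mathfrak{R}(x)\equiv 0 \bmod L$ (\Cref{cookexp}), composed with multipliers $V_{\lambda_k}$ handled by Magyar--Stein--Wainger transference (\Cref{Unique C-3-9}) together with the sparse bound for the continuous lacunary Birch maximal operator (\Cref{Unique C-4-6}); note that for your sparse-domination plan you need this continuous sparse (or $L^p$-improving) input, not merely $L^p(\R^n)$-boundedness, and the paper supplies it in the appendix via single-scale improving estimates, Fourier decay, and Oberlin's lemma. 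Without the divisibility mechanism your $q$-summation of transferred pieces has no uniform control, and no hypothesis on the growth rate alone can repair it.
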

	Indeed, we shall prove sparse bounds, \Cref{Unique C-1-9}, for the operator $\mathcal{M}^{\mathfrak{R},\phi}$. \Cref{Unique C-1-10} can be obtained as a consequence of sparse bounds. Further, note that \Cref{Unique C-1-10} is a generalization of the result obtained by Kesler-Lacey-Mena \cite{Lacey3} for the spherical case. In order to state the result concerning sparse bounds for the operator $\mathcal{M}^{\mathfrak{R},\phi}$, let us discuss the preliminaries notion for sparse bounds for discrete operators. 
	\subsection{Sparse bounds for discrete operators} 
	The study of sparse bounds for discrete operators is a new topic of research. It is well known that appropriate sparse bounds for an operator imply quantitative weighted $\ell^p$-estimates and vector-valued inequalities for the operator under consideration. We refer the reader to Lacey~\cite{Lacey2} for sparse bounds for spherical maximal functions in the continuous case and to~\cites{Lacey1,Kesler} for the discrete case. 
	
	\begin{figure}[H]
		\begin{center}
			\begin{tikzpicture}[scale=0.6]
				\draw [][->,thick] (-.5,0) -- (7.5,0) node[below] {$ \frac1p$} ;
				\draw [][->,thick] (0,-.5) -- (0,7.5) node[left] {$ \frac1q$};
				\filldraw (4.55,1.4) circle (.05em) ; 
				\filldraw (4.55,4.25) circle (.05em) ;
				\filldraw (4,4.75) circle (.05em) ;
				
				\filldraw (5, 4.8) circle (.05em) ;
				\filldraw (5.5, 5.3) circle (.05em) ;
				
				\draw[blue][dotted] (0,6) -- (5.5,5.3) -- (6,0); 
				
				\filldraw[cyan, opacity=.2] (0,6) -- (4.55,1.4)   --  (4.55,4.25)  --  (4,4.75)  -- (0,6);
				
				\filldraw[blue, opacity=.1] (0,6) -- (5, 4.8) -- (6,0) -- (4.55,1.4) --  (4.55,4.25)  --  (4,4.75) -- (0,6);
				\draw (0,6) node[left] {$Z_3=(0,1)$}; 
				
				\draw (4.5,1.4) node[below,left] {$ Z_0$};  \draw (4,4.75)  node[below, left] {$ Z_2$};   \draw  (4.5,4.25)  node[left, below] { $ Z_1$};
				
				\draw  (5.0, 4.6)  node[below, right] { $ P_2=\left(\frac{n-1}{n+1},\frac{n-1}{n+1}\right)$};
				\draw (6,0) node[below] {$(1,0)$}; \draw (5.4,5.4) node[above,right] {$ \left(\frac{n}{n+1},\frac{n}{n+1}\right)$};
			\end{tikzpicture}
		\end{center}
		\caption{Sparse bounds for the discrete spherical maximal function 
			hold for points $ (\frac{1}{p}, \frac{1}{q})$ in the interior of the polygon with vertices $Z_{0} 
			$,$Z_{1} $,$Z_{2} $, and $ Z_{3}$. Sparse bounds for $\mathcal{M}$ hold for points $ (\frac{1}{p}, \frac{1}{q})$ in the interior of the triangle with vertices $(0,1) 
			$, $P_2 $, and $(1,0)$.} \label{Spherical Sparse}
	\end{figure}
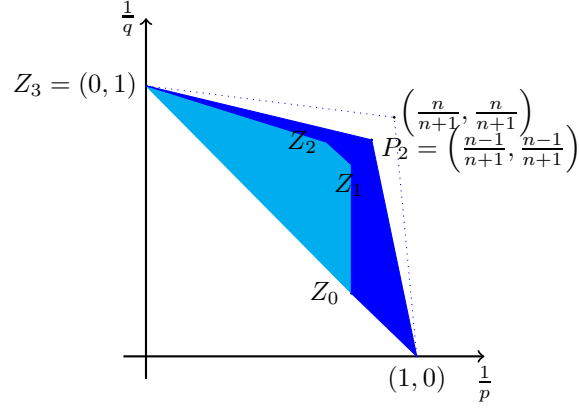

	A collection of cubes $\mathcal{S}$ in $\mathbb{Z}^{n}$ is called $\frac{1}{4}-$sparse if for each $Q \in \mathcal{S}$, there is a subset $E_{Q} \subset Q$ such that the sets $\{E_{Q} \subset Q : Q \in \mathcal{S} \}$ are pairwise disjoint and satisfy $|E_{Q}|> \frac{1}{4}|Q|$. We define the $(p, q)-$sparse form $\Lambda_{\mathcal{S},p,q}$, indexed by the sparse collection $\mathcal{S}$ as 
	\[\Lambda_{\mathcal{S},p,q}(f, g)= \sum_{Q\in \mathcal{S}}|Q|\langle f\rangle_{Q,p}\langle g\rangle_{Q,q},\]
	where $\langle f\rangle_{Q,p}=\left(\frac{1}{|Q|}\sum\limits_{m\in Q}|f(m)|^{p}\right)^{\frac{1}{p}}$ and we set $\langle f\rangle_{Q}=\langle f\rangle_{Q,1}.$ 
	
	For a sublinear operator $T$, let $\|T\|_{p,q}$ the $(p, q)-$sparse norm of $T$. It is defined as the smallest constant $C>0$ such that 
	\[|\langle Tf, g\rangle|\leq C \sup\limits_{\mathcal{S}}\Lambda_{\mathcal{S},p,q}(f,g),\]
	where $\langle\cdot,\cdot\rangle$ is the standard inner product on $\ell^2(\Z^n)$.
	
	Kesler~\cite{Kesler} proved the sparse bounds for local discrete spherical maximal function. Later Kesler-Lacey-Mena \cite{Lacey1} simplified the proof and extended the sparse bounds for the full discrete spherical maximal function $M_*$. More precisely, they proved that $\|M_{\star}\|_{p,q}<\infty$ whenever $ \left(\frac{1}{p}, \frac{1}{q}\right)$ lies in the interior of the polygon with vertices $Z_{0}=\left(\frac{n-2}{n}, \frac{2}{n}\right),\;Z_{1}=\left(\frac{n-2}{n}, \frac{n-2}{n}\right),\;Z_{2}=\left(\frac{n^{3}-4n^{2}+4n+1}{n^{3}-2n^{2}+n-2}, \frac{n^{3}-4n^{2}+6n-7}{n^{3}-2n^{2}+n-2}\right),\;\text{and}\;Z_{3}=(0,1)$, see \Cref{Spherical Sparse}. We prove the following sparse bounds for the maximal functions associated with the Birch-Magyar averages. 
	 \begin{theorem}\label{Unique C-1-9} 
		Let $\mathfrak{R}$ be a $\phi-$regular integral form of degree $d>1$  in $n$ variables and $\{\lambda_{k}\}_{k\in\N}$ be a sparse sequence contained in the set of regular values $\Lambda$. Then, the maximal function $\mathcal{M}^{\mathfrak{R},\phi}f(x)=\sup\limits_{\lambda_{k}}|M_{\lambda_{k},\phi}^{\mathfrak{R}}f(x)|$ satisfies the following sparse bounds
		\[\|\mathcal{M}^{\mathfrak{R},\phi}\|_{p,q}<\infty~\quad \text{for all}\; \left(\frac{1}{p}, \frac{1}{q}\right) \in S_n,\]
		where $S_{n}$ is the open triangle with vertices $S_1=(0,1)$, $S_2=\left(\frac{1+2\eta_{\mathfrak{R}}}{2(1+\eta_{\mathfrak{R}})},\frac{1+2\eta_{\mathfrak{R}}}{2(1+\eta_{\mathfrak{R}})}\right)$, and $S_3=(1,0)$.
	\end{theorem}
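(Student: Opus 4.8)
The plan is to derive \Cref{Unique C-1-9} from a single \emph{scale-free $\ell^p$-improving} estimate for the Birch--Magyar averages $M_{\lambda,\phi}^{\mathfrak R}$, fed into the by-now-standard stopping-time construction of a sparse dominating form. The two vertices $S_1=(0,1)$ and $S_3=(1,0)$ are essentially soft: $S_1$ comes from $\mathcal M^{\mathfrak R,\phi}\colon\ell^\infty\to\ell^\infty$ (each $M_{\lambda,\phi}^{\mathfrak R}$ is an average, hence a contraction on $\ell^\infty$), and $S_3$ from the weak-type $(1,1)$ inequality available for any maximal operator built from nonnegative averaging kernels; the open triangle $S_n$ is then obtained by interpolation of sparse forms from these together with the estimate at a neighbourhood of the diagonal vertex $S_2$. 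So the real content sits near $S_2$, where $1/p_2:=\frac{1+2\eta_{\mathfrak R}}{2(1+\eta_{\mathfrak R})}$, and everything hinges on the single-scale bound.

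\emph{Step 1 (single-scale $\ell^p$-improving).} I would prove that there is $\delta=\delta(\mathfrak R)>0$ such that for every regular value $\lambda\in\Lambda$ and every cube $Q\subset\Z^n$ of side length $\ell(Q)\ge\lambda^{1/d}$,
\[
\big|\big\langle M_{\lambda,\phi}^{\mathfrak R}f,\ g\big\rangle\big|\ \lesssim\ \Big(\tfrac{\lambda^{1/d}}{\ell(Q)}\Big)^{\delta}\,|Q|\,\langle f\rangle_{Q,p}\,\langle g\rangle_{Q,q}
\]
for $(1/p,1/q)$ in a neighbourhood of $S_2$ inside the triangle. At $\ell(Q)\sim\lambda^{1/d}$ this is exactly the scale-free $\ell^p$-improving inequality; the surplus factor $(\lambda^{1/d}/\ell(Q))^{\delta}$ on coarser cubes is what will allow the scales to be summed. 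The mechanism is the Birch--Magyar circle-method decomposition of the Fourier multiplier of $M_{\lambda,\phi}^{\mathfrak R}$ into a major-arc main term and a minor-arc error. On the minor arcs, Birch's / Weyl-type estimates for the exponential sum of $\mathfrak R$ — here the Birch rank enters, through $c_{\mathfrak R}$ and $\eta_{\mathfrak R}$, pinning $\delta$ in terms of $\eta_{\mathfrak R}$ — give $\|(\text{error})\|_{\ell^2\to\ell^2}\lesssim\lambda^{-\delta}$, which, interpolated against the trivial $\ell^\infty\to\ell^\infty$ bound and the $\ell^1\to\ell^\infty$ bound of size $\approx r_{\mathfrak R,\phi}(\lambda)^{-1}\approx\lambda^{1-n/d}$, upgrades to a localized $\ell^p$-improving estimate carrying a power gain. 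The major-arc term equals, up to the convergent singular series of $\mathfrak R$, a $\lambda^{1/d}$-dilate of the Euclidean average over $\{\mathfrak R=1\}\cap\operatorname{supp}\phi$; since $\mathfrak R$ is $\phi$-regular this hypersurface has a nonsingular point in $\operatorname{supp}\phi$, so it carries the Fourier decay needed for the corresponding continuous $L^p$-improving estimate, which transfers to $\Z^n$ with the right scaling. (Parts of this analysis already appear in Cook--Hughes~\cite{Cook&Hughes1}; the point is to record it in scale-free $\ell^p$-improving form.)

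\emph{Step 2 (the sparse form).} Given finitely supported $f,g$, run the usual recursion: take a cube $Q_0$ containing their supports, and iteratively select the maximal dyadic subcubes on which $\langle f\rangle_{Q,p}$ or $\langle g\rangle_{Q,q}$ exceeds a fixed large multiple of the parent value, producing a $\tfrac14$-sparse family $\mathcal S$. On a stopping cube $Q$, the portion of $\sup_k|M_{\lambda_k,\phi}^{\mathfrak R}f|$ coming from scales $\lambda_k^{1/d}\lesssim\ell(Q)$ is controlled (after the standard localization of $f$ to $3Q$) by Step 1, and one sums over those $k$ the geometric gain $(\lambda_k^{1/d}/\ell(Q))^{\delta}$. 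This is exactly where \Cref{Unique C-1-2} is used: because $\lambda_k=\mu_k!$ with $\log\mu_k/\log k\to\infty$, the scales $\{\lambda_k^{1/d}\}$ are so widely separated that
\[
\sum_{k\,:\,\lambda_k^{1/d}\le\ell(Q)}\Big(\tfrac{\lambda_k^{1/d}}{\ell(Q)}\Big)^{\delta}\ \lesssim\ 1
\]
uniformly in $Q$ — whereas for a generic lacunary sequence no such uniform summability holds, consistently with the failure of $M_{lac}^{\mathfrak R,\phi}$ on $\ell^p$ for $p<\tfrac{n}{n-1}$ and with Zienkiewicz's counterexample in the classical case. Summing the local inequalities over $Q\in\mathcal S$ gives $|\langle\mathcal M^{\mathfrak R,\phi}f,g\rangle|\lesssim\Lambda_{\mathcal S,p,q}(f,g)$ for $(1/p,1/q)$ near $S_2$; combining with the soft endpoints at $S_1,S_3$ and interpolating sparse forms yields $\|\mathcal M^{\mathfrak R,\phi}\|_{p,q}<\infty$ throughout the open triangle $S_n$.

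\emph{Main obstacle.} I expect Step 1 to carry the weight: pushing Birch's minor-arc estimates for a general $\phi$-regular form into a \emph{scale-free $\ell^p$-improving} inequality — rather than a mere $\ell^2$ bound — with the precise exponent pairing that places the diagonal vertex exactly at $S_2=(1/p_2,1/p_2)$. By comparison, the stopping-time argument and the verification that the factorial, super-polynomial growth of $\{\lambda_k\}$ trivializes the tail-over-scales sum should be routine once Step 1 is in hand.
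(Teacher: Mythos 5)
Your reduction collapses at Step 1: the claimed single-scale bound with the gain $\bigl(\lambda^{1/d}/\ell(Q)\bigr)^{\delta}$ on cubes larger than the scale is false. Take $f=1_{3Q}$ and $g=1_{Q}$ with $\ell(Q)\gg\lambda^{1/d}$: then $M_{\lambda,\phi}^{\mathfrak{R}}f\approx 1$ on $Q$, so $\langle M_{\lambda,\phi}^{\mathfrak{R}}f,g\rangle\approx |Q|$, whereas your right-hand side is $\bigl(\lambda^{1/d}/\ell(Q)\bigr)^{\delta}|Q|\,\langle f\rangle_{Q,p}\langle g\rangle_{Q,q}\approx\bigl(\lambda^{1/d}/\ell(Q)\bigr)^{\delta}|Q|$. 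Positivity and normalization of the averages rule out any spatial gain of this kind; the scale-free improving estimate that is actually available (\Cref{Unique C-4-1}) lives on cubes of side $\approx\lambda^{1/d}$ and carries no decay in $\ell(Q)$. Relatedly, the assertion that the $(1,0)$ vertex is soft because ``any maximal operator built from nonnegative averaging kernels'' is weak type $(1,1)$ is unjustified in the discrete setting; the off-diagonal vertices come from the $\ell^\infty\to\ell^\infty$ and $\ell^1\to\ell^\infty$ behaviour of the single averages together with interpolation of sparse forms, not from a weak $(1,1)$ bound for the maximal function.

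There is also a structural misdiagnosis in Step 2: you locate the use of sparseness in the summability of $\sum_{\lambda_k^{1/d}\le\ell(Q)}\bigl(\lambda_k^{1/d}/\ell(Q)\bigr)^{\delta}$, but that sum is uniformly bounded for \emph{every} lacunary sequence, so your scheme, if it worked, would give the same sparse bounds (hence $\ell^p$ bounds for all $p>1$) for arbitrary lacunary radii, contradicting the Zienkiewicz/Cook--Hughes counterexamples recalled in the introduction. The factorial structure of \Cref{Unique C-1-2} must enter arithmetically, and this is the engine of the actual proof: after the stopping-time reduction to \eqref{equation4.4}, one makes a high/low decomposition in a parameter $N$. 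The low piece (scales with $k\le N^{\epsilon}$) is handled by summing the genuine single-scale bound of \Cref{Unique C-4-1}, costing only $N^{\epsilon}$ as in \eqref{equation4.6}. For the high piece, sparseness gives $\mu_k>N^{d}$, hence $L=N!$ divides $\lambda_k=\mu_k!$ and $e_q(-a\lambda_k)=1$ for all $q\le N$; this makes the major-arc main term independent of the residues of $\lambda_k$ and lets it factor as $V_{\lambda_k}\circ S$, where $S$ is a fixed arithmetic convolution operator bounded on $\ell^p$ (\Cref{Unique C-4-3}, via Cook's counting lemma) and $\sup_{k}|V_{\lambda_k}f|$ is controlled by Magyar--Stein--Wainger transference (\Cref{Unique C-3-9}) together with the sparse bound for the continuous lacunary Birch--Magyar maximal function (\Cref{Unique C-4-6}); the remaining error pieces are $\ell^2$-small of order $N^{-(c_{\mathfrak{R}}-2)}$ as in \eqref{equation4.7}, and optimizing in $N$ produces exactly the diagonal vertex $S_2$. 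None of this arithmetic input (divisibility by $N!$, the operator $S$, transference, the continuous sparse bound) appears in your outline, so the proposal has a genuine gap rather than being a correct alternative route.
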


	The proof of \Cref{Unique C-1-9} relies on the method of High-Low analysis of the operator. We exploit the ideas from  Kesler-Lacey-Mena~\cite{Lacey1} along with the method of estimating the error terms from  \cites{Cook&Hughes1,Magyar2}. In order to complete our proof, we require the sparse bounds of the continuous analogue of the discrete maximal operator $\mathcal{M}^{\mathfrak{R},\phi}$. Since, the sparse bounds for the continuous analogue are not written explicitly in the literature, we provide the required details in the~\Cref{Appendix} for the convenience of the reader. Further, we require scale-free sparse bounds for the single averaging operator $M_{\lambda,\phi}^{\mathfrak{R}}$ for all $\lambda\in\Lambda$. Such estimates are of independent significance. The following result concerns the scale-free bounds for the Birch-Magyar averages. 
\begin{theorem}\label{Unique C-4-1}
    Let $\mathfrak{R}$ be a $\phi-$regular form of degree $d>1$  in $n $ variables and $S_{n}$ be the open triangle with vertices $(0,1)$, $(1,0)$, and $\left(\frac{1+2\eta_{\mathfrak{R}}}{2(1+\eta_{\mathfrak{R}})},\frac{1+2\eta_{\mathfrak{R}}}{2(1+\eta_{\mathfrak{R}})}\right)$. For $\left(\frac{1}{p},\frac{1}{q}\right)\in S_{n}$, there exists a constant $C_{p,\mathfrak{R}}$, independent of $\lambda\in\Lambda$, such that the following inequality holds 
    \[\langle M_{\lambda,\phi}^{\mathfrak{R}}f,g\rangle\leq C_{p,\mathfrak{R}}|E|\langle f\rangle_{E,p}\langle g\rangle_{E,q}.\]
    In particular, for $\left(\frac{1}{p},\frac{1}{q}\right)\in S_{n}$, we have
	\[\|M_{\lambda,\phi}^{\mathfrak{R}}\|_{\ell^p(\Z^n)\to \ell^{q'}(\Z^n)}\lesssim \lambda^{n\left(\frac{1}{q'}-\frac{1}{p}\right)}.\]
\end{theorem}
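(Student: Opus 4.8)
The plan is to first establish the scale-free $\ell^p$-improving estimate for the operator norm and then to deduce the sparse form from it, the two statements being essentially equivalent. Suppose $f,g$ are supported on a cube $E$ of side length comparable to $\lambda^{1/d}$, the scale at which the kernel of $M_{\lambda,\phi}^{\mathfrak{R}}$ lives; then $M_{\lambda,\phi}^{\mathfrak{R}}$ sends functions supported on $E$ to functions supported on a bounded dilate of $E$, so H\"older's inequality together with the improving bound gives $\langle M_{\lambda,\phi}^{\mathfrak{R}}f,g\rangle\le\|M_{\lambda,\phi}^{\mathfrak{R}}f\|_{\ell^{q'}}\|g\|_{\ell^{q}}\lesssim|E|^{\frac{1}{q'}-\frac1p}\|f\|_{\ell^p}\|g\|_{\ell^q}=|E|\,\langle f\rangle_{E,p}\langle g\rangle_{E,q}$, using $\frac1{q'}+\frac1q=1$ and $|E|\sim\lambda^{n/d}$; an arbitrary cube reduces to this case by tiling at the natural scale. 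Conversely, testing the sparse form against the natural cube recovers the operator norm with the power of $\lambda$ forced by the scaling of the kernel's support. Hence the real content of the theorem is the single-scale $\ell^p$-improving estimate, uniform over $\lambda\in\Lambda$.

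To prove that estimate I would run the Hardy--Littlewood circle method in the form used by Birch and Magyar and refined by Cook--Hughes. Writing $\mathbf{1}[\mathfrak{R}(y)=\lambda]=\int_0^1 e^{2\pi i\alpha(\mathfrak{R}(y)-\lambda)}\,d\alpha$ and splitting $[0,1]$ into major arcs about rationals $a/q$ with $\gcd(a,q)=1$ and $q$ below a threshold, the Fourier multiplier of $M_{\lambda,\phi}^{\mathfrak{R}}$ decomposes as a main term plus an error term $\mathcal{E}_\lambda$. Up to the normalization $r_{\mathfrak{R},\phi}(\lambda)^{-1}\approx\lambda^{1-n/d}$ (valid for $\lambda\in\Lambda$ by $\phi$-regularity), the main term is a sum over these rationals of modulated, frequency-localized copies of the \emph{continuous} Birch--Magyar average $\mathcal{A}_\lambda^{\mathfrak{R}}$ at scale $\lambda$, each weighted by the normalized Gauss--Weyl sum $S(a,q)=q^{-n}\sum_{r\in(\Z/q\Z)^n}e^{2\pi i (a/q)\mathfrak{R}(r)}$; Birch's Weyl-type inequality gives $|S(a,q)|\lesssim q^{-c_{\mathfrak{R}}+\varepsilon}$, and $c_{\mathfrak{R}}>2$ (from $\phi$-regularity) makes $\sum_q q\cdot q^{-c_{\mathfrak{R}}+\varepsilon}$ converge. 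The error $\mathcal{E}_\lambda$, collecting the minor arcs and the tails of the major-arc approximation, satisfies the power-saving bound $\|\mathcal{E}_\lambda\|_{\ell^2(\Z^n)\to\ell^2(\Z^n)}\lesssim\lambda^{-\eta_{\mathfrak{R}}}$; this is exactly where $\eta_{\mathfrak{R}}$ enters, through the Weyl differencing and the minor-arc estimates.

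For the main term, each modulated, frequency-localized piece is dominated in $\ell^p\to\ell^{q'}$ by $\|\mathcal{A}_\lambda^{\mathfrak{R}}\|_{L^p\to L^{q'}}$ --- modulation is an isometry, the smooth frequency cutoffs are multipliers bounded uniformly in the scale, and the continuous bound transfers to $\Z^n$ --- so the sum over residues $a$ and denominators $q$ costs only the convergent factor $\sum_q q\cdot q^{-c_{\mathfrak{R}}+\varepsilon}$; the $L^p\to L^{q'}$-improving bound for $\mathcal{A}_\lambda^{\mathfrak{R}}$ on $S_n$ with the power of $\lambda$ claimed in the statement --- a consequence of the Fourier decay of the surface measure on $\{\mathfrak{R}=1\}$ guaranteed by $\phi$-regularity, and the basis for the continuous sparse bounds proved in the appendix --- then gives the main term with a constant independent of $\lambda$. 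For $\mathcal{E}_\lambda$ I would interpolate the $\ell^2$ power saving $\lambda^{-\eta_{\mathfrak{R}}}$ against the trivial bounds $\|\mathcal{E}_\lambda\|_{\ell^1\to\ell^\infty}\lesssim r_{\mathfrak{R},\phi}(\lambda)^{-1}$ and $\|\mathcal{E}_\lambda\|_{\ell^p\to\ell^p}\lesssim1$: in the \emph{open} triangle $S_n$ the $\ell^2$ gain strictly outweighs the off-diagonal loss, so $\mathcal{E}_\lambda$ obeys the same bound as the main term, and the diagonal vertex $\left(\frac{1+2\eta_{\mathfrak{R}}}{2(1+\eta_{\mathfrak{R}})},\frac{1+2\eta_{\mathfrak{R}}}{2(1+\eta_{\mathfrak{R}})}\right)$ is exactly the point at which these two effects balance, which is why it must be excluded. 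Summing the two contributions and undoing the reduction of the first paragraph completes the proof.

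The main obstacle I anticipate is the main term. One must convert the arithmetic major-arc expansion into a genuine $\ell^p$-improving inequality, which requires (i) the continuous $L^p$-improving bounds for $\mathcal{A}_\lambda^{\mathfrak{R}}$ in exactly the triangle $S_n$, with the correct power of $\lambda$ and uniformly in the scale --- this is the purpose of the appendix and rests on having enough Fourier decay for $\widehat{d\mu_1}$, which comes from the Birch rank hypothesis --- and (ii) summing the modulated, frequency-localized copies over all admissible $a/q$ without losing the uniformity in $\lambda$, which hinges on the absolute convergence $\sum_q q^{1-c_{\mathfrak{R}}+\varepsilon}<\infty$ (equivalently $c_{\mathfrak{R}}>2$) and on scale-independent multiplier bounds for the cutoffs. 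Arranging the powers of $\lambda$, the denominators $q$, and the localization scales to bookkeep so that the final exponent is exactly the one in the statement is the delicate part; by contrast, once the power-saving $\ell^2$ bound for $\mathcal{E}_\lambda$ is available --- it can be read off from the circle-method analysis in \cites{Cook&Hughes1,Magyar2} --- the error term is a routine interpolation.
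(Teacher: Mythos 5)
Your plan is correct in outline, but it is a genuinely different route from the paper's. For this single-scale theorem the paper never invokes continuous $L^p$-improving bounds or any transference: after reducing to indicators supported in $E=[0,\lambda^{1/d}]^n\cap\Z^n$ and recording the trivial estimates at $(0,1)$ and $(1,0)$, it introduces a free parameter $N$, splits off the radii $\lambda^{1/d}\le N$, applies Magyar's major-arc approximation, truncates the modulus sum at $N$ and rescales the $\zeta$-cutoffs, and proves for each resulting piece either $\langle\mathcal Tf\rangle_{E,\infty}\lesssim N^{d}\langle f\rangle_E$ (kernel bounds via the decay of $\zeta_t\star d\sigma_{\mathfrak R}$) or $\langle\mathcal Tf\rangle_{E,2}\lesssim N^{-d\eta_{\mathfrak R}}\langle f\rangle_E^{1/2}$ (Plancherel, $|F_q(a,b)|\lesssim q^{-c_{\mathfrak R}}$, decay of $\widehat{d\sigma_{\mathfrak R}}$); optimizing in $N$ produces exactly the vertex $\frac{1+2\eta_{\mathfrak R}}{2(1+\eta_{\mathfrak R})}$. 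Your scheme instead bounds the whole major-arc term by transferred continuous improving estimates and treats the error by interpolating the $\ell^2$ saving $\lambda^{-\eta_{\mathfrak R}}$ against trivial bounds; your balance point reproduces the same vertex, so the numerology is consistent, and this mirrors the spherical arguments of \cites{Hughes,Lacey4}. Two corrections of record: the continuous improving bound (\Cref{Unique C-4-8}) holds on the larger triangle governed by $c_{\mathfrak R}$, not ``exactly $S_n$'' --- the restriction to $S_n$ comes solely from the arithmetic error term, as your own balancing shows; and your H\"older/tiling equivalence yields the operator norm $|E|^{\frac1{q'}-\frac1p}=\lambda^{\frac nd\left(\frac1{q'}-\frac1p\right)}$, which is the exponent actually forced by scaling.

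The step you should not present as routine is the main-term transference. The only transference statement in play (\Cref{Unique C-3-9}, Magyar--Stein--Wainger) is diagonal, $\ell^p\to\ell^p_B$; ``modulation is an isometry and the cutoffs are bounded multipliers'' does not bound the $\ell^p\to\ell^{q'}$ norm of the modulus-$q$ piece by $\|\mathcal A^{\mathfrak R}_{\lambda}\|_{L^p\to L^{q'}}$, because that piece is a periodization over $b\in\Z_q^n$ of the continuous multiplier, not a single modulated copy, and off-diagonal improving bounds do not pass through periodization as a black box. To make it rigorous you would factor the multiplier into the arithmetic part $\sum_{a}\sum_{b}F_q(a,b)\widehat{\zeta_{2q}}\left(\xi-\frac bq\right)$ (handled kernel-side as in \Cref{Unique C-4-3}, with the Gauss-sum decay supplying a $q$-decaying constant) composed with the periodized continuous part, compare the latter with $\mathcal A^{\mathfrak R}_{\lambda}$ at the kernel level, and verify that the $q$-sum converges uniformly in $\lambda$ on all of $S_n$, not only at $\ell^2$. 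This is exactly the ``delicate bookkeeping'' you flag, and it is the one genuine gap in the proposal as written; the paper's High/Low optimization in $N$ is designed precisely to avoid it, at the cost of proving only restricted (indicator) bounds at the vertex and interpolating.
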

The $\ell^p$-improving estimates for single scale spherical averages were proved by \cites{Hughes,Lacey4}. They used refined Kloosterman sum estimates to improve the range of boundedness. However, we would like to remark here that such refined estimates are not available for the Birch-Magyar setting. An improvement for the Kloosterman sum in the Birch-Magyar case would lead to a better range of indices than those obtained in \Cref{Unique C-4-1}. Nonetheless, using the estimates in \cite{Hughes}, we can improve the sparse bounds obtained for the maximal operator in \Cref{Unique C-1-9} in the spherical case. More precisely, we have the following result. 
	\begin{theorem}\label{Unique C-1-8}
		Let $n\geq5$ and $P_{n}$ be the open triangle with vertices $P_1=(0,1)$, $P_2=\left(\frac{n-1}{n+1},\frac{n-1}{n+1}\right)$, and $P_3=(1,0)$. Then for all $ \left(\frac{1}{p}, \frac{1}{q}\right) \in P_{n}$, we have the sparse bound $\|\mathcal{M}\|_{p,q}<\infty.$ 
	\end{theorem}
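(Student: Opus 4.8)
The plan is to mimic the proof of the main sparse bound, Theorem~\ref{Unique C-1-9}, specializing to the spherical form $\mathfrak{R}(x) = |x|^2$ (so $d = 2$), but replacing the scale-free single-scale estimate of Theorem~\ref{Unique C-4-1} by the sharper $\ell^p$-improving estimates for single-scale discrete spherical averages due to Hughes~\cite{Hughes} (and Lacey~\cite{Lacey4}). Concretely, the High-Low decomposition of $\mathcal{M}$ will produce, at each dyadic scale, a ``high'' (oscillatory/error) piece handled via Magyar-Stein-Wainger-type circle-method asymptotics together with $\ell^2$ or $\ell^p$ bounds, and a ``low'' (main term) piece which is essentially a continuous spherical average and is controlled by the continuous sparse bounds recalled in the Appendix. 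The novelty is only in the input bound used for the single-scale operator $M_\lambda$: where the general argument must use the relatively crude range $S_n$ coming from Birch-Magyar Kloosterman-sum bounds, in the spherical case we instead feed in the known scale-free $\ell^p(\Z^n)\to\ell^{q'}(\Z^n)$ improving estimates for $M_\lambda$ with $(\tfrac1p,\tfrac1q)$ ranging over the larger region with the critical vertex at $P_2 = \left(\tfrac{n-1}{n+1},\tfrac{n-1}{n+1}\right)$.

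The key steps, in order, are as follows. First, I would record the single-scale scale-free sparse bound for $M_\lambda$ valid for $(\tfrac1p,\tfrac1q)$ in the open triangle $P_n$ with vertices $(0,1)$, $P_2$, $(1,0)$: this is the sparse-form reformulation of Hughes's $\ell^p$-improving inequality $\|M_\lambda\|_{\ell^p \to \ell^{q'}} \lesssim \lambda^{n(1/q' - 1/p)}$, obtained exactly as the second assertion of Theorem~\ref{Unique C-4-1} is deduced from the first (localize $f,g$ to a cube $E$ of sidelength $\approx \lambda$, apply Hölder and the improving bound). Second, I would run the High-Low / sparse-domination machinery of Kesler-Lacey-Mena~\cite{Lacey1}: decompose $\mathcal{M}f \leq \sup_k |A_{\lambda_k}^{\mathrm{low}}f| + \sum_k |A_{\lambda_k}^{\mathrm{high}}f|$ where the splitting is via a smooth Fourier cutoff at frequency $\lambda_k^{-1}$ times a small power; the low part is dominated by the Hardy-Littlewood maximal function of a continuous spherical average, whose sparse bounds are those recalled in the Appendix and which comfortably contain $P_n$. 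Third, for the high part I would use the sparsity of $\{\lambda_k\}$ (Definition~\ref{Unique C-1-2}): the rapid growth $\log\mu_k/\log k \to \infty$ makes the scales so widely separated that the $\ell^p$ errors, each of which decays like a negative power of $\lambda_k$ by the circle-method asymptotic expansion for $\widehat{M_\lambda}$ (Magyar-Stein-Wainger), sum to something finite, and one converts the resulting $\ell^p \to \ell^{q'}$ off-diagonal bounds into a sparse form by the usual stopping-time/Calderón-Zygmund argument on the exceptional sets. Collecting the admissible $(\tfrac1p,\tfrac1q)$ from all pieces and intersecting gives $P_n$.

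I expect the main obstacle to be the interface between the single-scale improving estimate and the sparse form at the critical vertex $P_2$: near $(\tfrac{n-1}{n+1},\tfrac{n-1}{n+1})$ the single-scale bound is (essentially) sharp and has no room to spare, so the summation over the sparse scales $\lambda_k = \mu_k!$ must be carried out carefully, extracting a genuine gain from the super-polynomial spacing rather than from any decay margin in the individual terms — this is precisely where Definition~\ref{Unique C-1-2} is used, and where one must check that the exceptional-set construction in the sparse-domination argument does not lose the endpoint. The remaining ingredients — the continuous sparse bounds (Appendix), the circle-method expansion of $\widehat{M_\lambda}$, and the passage from improving estimates to sparse forms — are by now standard, so the proof reduces to verifying that substituting Hughes's sharper single-scale input into the scheme of Theorem~\ref{Unique C-1-9} enlarges the admissible region exactly to $P_n$.
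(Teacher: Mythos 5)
Your proposal matches the paper's proof: the paper establishes \Cref{Unique C-1-8} by rerunning the argument of \Cref{Unique C-1-9} verbatim, with the only change being that the single-scale input of \Cref{Unique C-4-1} is replaced by Hughes's $\ell^p$-improving estimate $\|M_{\lambda}f\|_{\ell^{p'}}\lesssim_{p,\delta}\lambda^{\delta-n\left(\frac{2}{p}-1\right)}\|f\|_{\ell^p}$, recast as the sparse-form bound $\langle M_{\lambda}f,g\rangle\leq C\lambda^{\delta}\langle f\rangle_{Q,p}\langle g\rangle_{Q,q}|Q|$, exactly as you outline. The only minor discrepancy is that the available single-scale estimate carries an arbitrarily small $\lambda^{\delta}$ loss rather than being loss-free at $P_2$, but since $P_n$ is open this is harmless and does not require the extra care at the critical vertex that you anticipate.
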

	\subsection{Organization of the paper} The proofs of \Cref{Unique C-1-9} and \Cref{Unique C-1-8} are given in \Cref{sec:maximal}. \Cref{sec:single} contains the proof of \Cref{Unique C-4-1}. \Cref{Appendix} is devoted to proving sparse bounds for the continuous analogue of the operator $\mathcal{M}^{\mathfrak{R},\phi}$.
	
	\subsection{Notations}
	We use the following notation throughout the paper.
	\begin{enumerate}
		\item We write $1_{A}$ to denote the characteristic function of the set $A.$
		\item For a ring $K$, we use the notation $a\cdot b$ (for vectors $a,b\in K^{n}$) to denote the sum $\sum\limits_{1\leq j\leq n}a_{j}b_{j}$.
		\item The notation $\star$ denotes convolution on any group $G$, provided it is well defined.
		\item The torus $(\mathbb{R}/\mathbb{Z})^{n}$ identified with the cube $[-\frac{1}{2},\frac{1}{2}]^{n}$ and it is  denoted by $\mathbb{T}^{n}$.
		\item For a function $f$ defined on $\mathbb{Z}^{n}(\text{ or }\mathbb{R}^{n})$, we denote its $\mathbb{Z}^{n}(\text{ or }\mathbb{R}^{n})$-Fourier transform by $\widehat{f}(\xi)$ for $\xi\in\mathbb{T}^{n}(\text{ or }\mathbb{R}^{n})$. 
		\item The notation $M\lesssim_{a} N$ or $N\gtrsim_{a} M$ means that there exists an absolute constant $0<C<\infty$ depending on $a$, such that $M\leq CN.$ If there is nothing in the subscript of $\lesssim$, then $C$ is independent of the parameters on which $M$ and $N$ depend.
		\item $M\approx_{a} N$ denotes $M\lesssim_{a} N$ and $M\gtrsim_{a} N$.
		\item We denote $e(x)=e^{2\pi ix}$.
		\item The group $\mathbb{Z}_{q}$ denotes $\mathbb{Z}/q\mathbb{Z}$, and $U_{q}$ denotes $\mathbb{Z}_{q}^{\times}$ with the understanding that $U_{1}=\mathbb{Z}_{1}=\{0\}$, also denote $\Z_{q}^{n}=\Z_{q}\times\Z_{q}\times\dots\times\Z_{q}~(\text{n-times})$.
		\item For a function $f$ defined on $\R^n$, and for any $a\in\R$ we denote $f_{a}(x)$ by $\frac{1}{a^n}f\left(\frac{x}{a}\right)$, thus $\widehat{f_a}(\xi)=\widehat{f}(a\xi).$
	\end{enumerate}

	\section[Sparse bounds for the single scale Birch-Magyar average]{Sparse bounds for the single scale Birch-Magyar average: Proof of \Cref{Unique C-4-1}}\label{sec:single}
First, note that using standard arguments, it is enough to prove the theorem for characteristic functions. Let $E=\left[0,\lambda^{\frac{1}{d}}\right]^n\cap\Z^n$ and consider $f=1_F $ and $ g=1_G$, where $F,G\subseteq E$.
It is elementary to verify that  
\begin{align*}
    \|M_{\lambda,\phi}^{\mathfrak{R}}f\|_{\ell^{\infty}}&=\sup\limits_{x}\left|\frac{1}{\lambda^{\frac{n}{d}-1}}\sum_{\mathfrak{R}(y)=\lambda}f(x-y)\right|\\&\lesssim\|f\|_{\ell^\infty},
\end{align*}
and 
\begin{align*}
    \|M_{\lambda,\phi}^{\mathfrak{R}}f\|_{\ell^{1}}&\leq \frac{1}{\lambda^{\frac{n}{d}-1}}\sum_{x\in\Z^n}\sum_{\mathfrak{R}(y)=\lambda}|f(x-y)|\\&\lesssim\|f\|_{\ell^1}.
\end{align*}
Therefore, the operator $M_{\lambda,\phi}^{\mathfrak{R}}$ is bounded from $\ell^p$ to $\ell^{q'}$ for $\left(\frac{1}{p},\frac{1}{q}\right)=(0,1)$ and $\left(\frac{1}{p},\frac{1}{q}\right)=(1,0)$. Using the interpolation arguments, observe that it is enough to prove that the operator $M_{\lambda,\phi}^{\mathfrak{R}}$ is bounded from $\ell^p$ to $\ell^{q'}$ at the point $\left(\frac{1}{p},\frac{1}{q}\right)=\left(\frac{1+2\eta_{\mathfrak{R}}}{2(1+\eta_{\mathfrak{R}})},\frac{1+2\eta_{\mathfrak{R}}}{2(1+\eta_{\mathfrak{R}})}\right)$. 

We shall decompose $M_{\lambda,\phi}^{\mathfrak{R}}f$ into several but finitely many terms and prove appropriate estimate on each term. More precisely, for every term, say $\mathcal Tf$, in the decomposition of $M_{\lambda,\phi}^{\mathfrak{R}}f$, and for any integer $N$ we will establish one of the following two type of estimates. 
\begin{eqnarray}\label{equation4.1}
	\langle\mathcal{T}f\rangle_{E,\infty}\lesssim N^d\langle f\rangle_{E}
	\end{eqnarray}
\begin{eqnarray}\label{equation4.2}\langle\mathcal{T}f\rangle_{E,2}\lesssim N^{-d\eta_{\mathfrak{R}}}\langle f\rangle_{E}^{\frac{1}{2}}
	\end{eqnarray}
Note that combining these estimates together, we get that 
\[|E|^{-1}\langle M_{\lambda,\phi}^{\mathfrak{R}}f,g\rangle\lesssim N^d\langle f\rangle_{E}\langle g\rangle_{E}+N^{-d\eta_{\mathfrak{R}}}\langle f\rangle_{E}^{\frac{1}{2}}\langle f\rangle_{E}^{\frac{1}{2}}.\] 
Then optimizing over $N$ we obtain 
\[|E|^{-1}\langle M_{\lambda,\phi}^{\mathfrak{R}}f,g\rangle\lesssim[\langle f\rangle_{E}\langle g\rangle_{E}]^{\frac{1+2\eta_{\mathfrak{R}}}{2(1+\eta_{\mathfrak{R}})}}.\] Therefore, we need to prove either \eqref{equation4.1} or \eqref{equation4.2} for each part in the decomposition of the operator. 

Let $N$ be a positive integer and decompose the operator as follows 
\[M_{\lambda,\phi}^{\mathfrak{R}}f=1_{\{\lambda^{\frac{1}{d}}\leq N\}}M_{\lambda,\phi}^{\mathfrak{R}}f+1_{\{\lambda^{\frac{1}{d}}> N\}}M_{\lambda,\phi}^{\mathfrak{R}}f.\]
We set $M_{1,1}f:=1_{\{\lambda^{\frac{1}{d}}\leq N\}}M_{\lambda,\phi}^{\mathfrak{R}}f$. For $\lambda^{\frac{1}{d}}\geq N$, by following the approach of Magyar \cite[Lemma 1]{Magyar2}, we decompose the operator further as 
$M_{\lambda,\phi}^{\mathfrak{R}}=C_{\lambda}+M_{2,1}$ , where the multiplier $m_{2,1}$ of the error term given by the convolution operator $M_{2,1}$ satisfies
\begin{eqnarray}\label{equation4.3}
	\sup\limits_{\xi}|m_{2,1}(\xi)|\lesssim_{\eta_{\mathfrak{R}}}\lambda^{-\eta_{\mathfrak{R}}},
	\end{eqnarray}
and the multiplier corresponding to the operator $C_{\lambda}$ is given by
\[c_{\lambda}(\xi)=\sum_{1\leq q\leq \lambda^{\frac{1}{d}}}\sum_{b\in\mathbb{Z}^{n}_{q}}\sum_{a\in\mathbb{Z}_{q}^{\times}}e_{q}(-a\lambda)F_{q}(a,b)\widehat{\zeta_{q}}\left(\xi-\frac{b}{q}\right)\widehat{d\sigma_{\mathfrak{R}}}\left(\lambda^{\frac{1}{d}}\left(\xi-\frac{b}{q}\right)\right).\]
Here in the expression above, $\zeta$ is a radial Schwartz function on $\mathbb{R}^{n}$ such that $\widehat{\zeta}(\xi)$ is identically $1$ for $|\xi|\leq\frac{1}{2}$ and $0$ if $|\xi|\geq 1$ with $\widehat{\zeta}_{q}(\xi)=\widehat{\zeta}(q\xi)$. The measure $d\sigma_{\mathfrak{R}}$ is defined by \[d\sigma_{\mathfrak{R}}(y)=\phi(y)\frac{d\mu(y)}{|\nabla\mathfrak{R}(y)|},\] where $d\mu$ is euclidean surface measure on the hypersurface $\{x\in\mathbb{R}^{n}:\mathfrak{R}(x)=1\}$.

Let $F_{q}(a,b)$ denote the normalized Weyl sum defined as  \[F_{q}(a,b)=\frac{1}{q^{n}}\sum_{m\in\mathbb{Z}_{q}^{n}}e\left(\mathfrak{R}(m)\frac{a}{q}+\frac{m\cdot b}{q} \right),\]  where $a\in\mathbb{Z}_{q}$ and $b\in\mathbb{Z}_{q}^{n}$. 
We have the following estimate~\cite[p. 3862]{Cook&Hughes1} for the quantity $F_{q}(a,b)$: 
\[|F_{q}(a,b)|\lesssim q^{-c_{\mathfrak{R}}},\quad\text{for}\quad (a,q)=1.\]
 
We will require to decompose the operator $C_\lambda$ further as follows,
\[C_\lambda=M_{1,2}+M_{2,2}+M_{2,3},\]
where the operators $M_{1,2}$, $M_{2,2}$, and $M_{2,3}$ correspond to the multipliers $m_{1,2}$, $m_{2,2}$, and $m_{2,3}$ respectively. These are given by
\begin{align*}
	m_{1,2}(\xi)&=\sum_{1\leq q\leq N}\sum_{b\in\mathbb{Z}^{n}_{q}}\sum_{a\in\mathbb{Z}_{q}^{\times}}e_{q}(-a\lambda)F_{q}(a,b)\widehat{\zeta_{\frac{q\lambda^{\frac{1}{d}}}{N}}}\left(\xi-\frac{b}{q}\right)\widehat{d\sigma_{\mathfrak{R}}}\left(\lambda^{\frac{1}{d}}\left(\xi-\frac{b}{q}\right)\right),\\
	m_{2,2}(\xi)&=\sum_{1\leq q\leq N}\sum_{b\in\mathbb{Z}^{n}_{q}}\sum_{a\in\mathbb{Z}_{q}^{\times}}e_{q}(-a\lambda)F_{q}(a,b)\left(\widehat{\zeta_{q}}\left(\xi-\frac{b}{q}\right)-\widehat{\zeta_{\frac{q\lambda^{\frac{1}{d}}}{N}}}\left(\xi-\frac{b}{q}\right)\right)\widehat{d\sigma_{\mathfrak{R}}}\left(\lambda^{\frac{1}{d}}\left(\xi-\frac{b}{q}\right)\right),\\
	m_{2,3}(\xi)&=\sum_{N<q\leq\lambda^{\frac{1}{d}}}\sum_{b\in\mathbb{Z}^{n}_{q}}\sum_{a\in\mathbb{Z}_{q}^{\times}}e_{q}(-a\lambda)F_{q}(a,b)\widehat{\zeta_{q}}\left(\xi-\frac{b}{q}\right)\widehat{d\sigma_{\mathfrak{R}}}\left(\lambda^{\frac{1}{d}}\left(\xi-\frac{b}{q}\right)\right).
\end{align*}
Next, we prove estimates \eqref{equation4.1} or \eqref{equation4.2} for each of the terms as above. 
\subsection{\textbf{Estimate for $M_{1,1}$}} Since $r_{\mathfrak{R},\phi}(\lambda)\approx\lambda^{\frac{n}{d}-1}$ and $\lambda^{\frac{1}{d}}\leq N$, we have
\begin{align*}
	\langle M_{\lambda,\phi}^{\mathfrak{R}}f\rangle_{E,\infty}&=\frac{1}{r_{\mathfrak{R},\phi}(\lambda)}\sum_{x:\mathfrak{R}(x)=\lambda}\phi(x/\lambda^{1/d})f(y-x)\\
	&\lesssim\frac{N^d}{\lambda^{\frac{n}{d}}}\|f\|_{\ell^1}\\
	&\lesssim N^d\langle f\rangle_{E}.
\end{align*}
\subsection{\textbf{Estimate for $M_{1,2}$}} Consider the kernel $K_{1,2}(x)=(m_{1,2})^{\vee}(x)$ of the operator $M_{1,2}$. We have
\begin{align*}
	&|K_{1,2}(x)|\\
   	=&\left|\sum_{1\leq q\leq N}\sum_{a\in\Z_{q}^{\times}}e_{q}(-a\lambda)\sum_{b\in\Z_{q}^n}F_q(a,b)\int_{\T^n}e(x\cdot\xi)\widehat{\zeta_{\frac{q}{N}}}\left(\lambda^{\frac{1}{d}}\left(\xi-\frac{b}{q}\right)\right)\widehat{d\sigma_{\mathfrak{R}}}\left(\lambda^{\frac{1}{d}}\left(\xi-\frac{b}{q}\right)\right)d\xi\right|\\
   	=&\left|\sum_{1\leq q\leq N}\frac{1}{\lambda^{\frac{n}{d}}}\zeta_{\frac{q}{N}}\star d\sigma_{\mathfrak{R}}\left(\frac{x}{N}\right)\sum_{a\in\Z_{q}^{\times}}e_{q}(-a\lambda)\sum_{b\in\Z_{q}^n}F_q(a,b)e\left(x\cdot\frac{b}{q}\right)\right|\\
	=&\left|\frac{1}{\lambda^{\frac{n}{d}}}\sum_{1\leq q\leq N}\zeta_{\frac{q}{N}}\star d\sigma_{\mathfrak{R}}\left(\frac{x}{N}\right)\sum_{a\in\Z_{q}^{\times}}e_{q}(-a\lambda)e\left(\frac{a\mathfrak{R}(x)}{q}\right)\right|\\
	\lesssim&\frac{1}{\lambda^{\frac{n}{d}}}\sum_{1\leq q\leq N}q\left(\frac{N}{q}\right)\frac{1}{\left(1+\left|\frac{x}{\lambda^{\frac{1}{d}}}\right|\right)^{2n}}\\
	\lesssim&\frac{1}{N^{n-2}},
\end{align*}
    where we have used the following decay estimate (see \cite[p. 970]{Hughes}) in the second last step,
	\[|\zeta_{t}\star d\sigma_{\mathfrak{R}}(x)|\lesssim t^{-1}\frac{1}{\left(1+\left|\frac{x}{t}\right|\right)^{2n}}.\]
Hence, $\|M_{1,2}f\|_{\ell^{\infty}}\lesssim\|K_{1,2}(x)\|_{\ell^{\infty}}\|f\|_{\ell^1}\lesssim\frac{1}{N^{n-2}}\|f\|_{\ell^1}.$ This yields that \[\langle M_{1,2}f\rangle_{E,\infty}\lesssim N^2\langle f\rangle_{E}.\]

\subsection{\textbf{Estimate for $M_{2,1}$}}This follows by applying Plancherel's theorem. Recall  the inequality \eqref{equation4.3} to get that
\[\|M_{2,1}f\|_{\ell^{2}}\lesssim N^{-d\eta_{\mathfrak{R}}}\|f\|_{\ell^2}.\]
Thus, we have
\[\langle M_{2,1}f\rangle_{E,2}\lesssim N^{-d\eta_{\mathfrak{R}}}\langle f\rangle_{E}^{\frac{1}{2}}.\]
\subsection{\textbf{Estimate for $M_{2,2}$}} Note that we have the following decay estimate from \cite[Lemma 6]{Magyar2},
\[|\widehat{d\sigma}_{\mathfrak{R}}(\xi)|\lesssim\frac{1}{(1+|\xi|)^{c_{\mathfrak{R}}-1}}.\]
Further, since  $\widehat{\zeta_{q}}\left(\xi-\frac{b}{q}\right)-\widehat{\zeta_{\frac{q\lambda^{\frac{1}{d}}}{N}}}\left(\xi-\frac{b}{q}\right)$ is supported in the annulus $\left\{\xi\in\R^n:\;\frac{N}{q\lambda^{\frac{1}{d}}}\lesssim\left|\xi-\frac{b}{q}\right|\lesssim\frac{1}{q}\right\}$, we get that  
\begin{align*}
    |m_{2,2}(\xi)|&\lesssim\sum_{1\leq q\leq N}\sum_{a\in\mathbb{Z}_{q}^{\times}}\frac{1}{q^{c_{\mathfrak{R}}}}\frac{1}{\left(\lambda^{\frac{1}{d}}|\xi-\frac{b}{q}|\right)^{c_{\mathfrak{R}}-1}}\\&\lesssim\sum_{1\leq q\leq N}\sum_{a\in\mathbb{Z}_{q}^{\times}}\frac{1}{q^{c_{\mathfrak{R}}}}\left(\frac{q}{N}\right)^{c_{\mathfrak{R}}-1}\\&\lesssim\frac{1}{N^{c_{\mathfrak{R}}-2}}\lesssim \frac{1}{N^{d\eta_{\mathfrak{R}}}}.
\end{align*}
Hence it follows that
\[\langle M_{2,2}f\rangle_{E,2}\lesssim N^{-d\eta_{\mathfrak{R}}}\langle f\rangle_{E}^{\frac{1}{2}}.\]
\subsection{\textbf{Estimate for $M_{2,3}$}}By Plancherel's theorem and the bound $|F_q(a,b)|\lesssim q^{-c_{\mathfrak{R}}}$, we obtain
\begin{align*}
	\|M_{2,3}f\|_{\ell^2}&\lesssim\sum_{N<q\leq\lambda^{\frac{1}{d}}}\sum_{a\in\mathbb{Z}_{q}^{\times}} q^{-c_{\mathfrak{R}}}\|f\|_{\ell^2}\\
	&\lesssim\sum_{N<q}q^{-(c_{\mathfrak{R}}-1)}\|f\|_{\ell^2}\lesssim N^{-(c_{\mathfrak{R}}-1)}\|f\|_{\ell^2}.
\end{align*}
This implies that
\[\langle M_{2,3}f\rangle_{E,2}\lesssim N^{-d\eta_{\mathfrak{R}}}\langle f\rangle_{E}^{\frac{1}{2}}.\]
This concludes the proof of \Cref{Unique C-4-1}.\qed

\section{Sparse bounds for highly composite case: Proof of \Cref{Unique C-1-9} and \Cref{Unique C-1-8}:}\label{sec:maximal}
\begin{proof}[\textbf{Proof of \Cref{Unique C-1-9}:}]
	We begin with the standard recursion argument for sparse bounds.  Let  $Q$ be a fixed dyadic cube and $f=1_{F}$ and $g=1_{G}$ be such that the sets $F$ and $G$ are contained in $3Q$ and $Q$ respectively. Let $\mathcal{Q}_{Q}$ be the collection of maximal dyadic subcubes of $Q$ satisfying 
	\[\langle f\rangle_{3P}>C\langle f\rangle_{3Q}\] 
	for a large constant $C$. 
	
	Recall the definition of admissible stopping time for a given dyadic cube. For a dyadic cube $Q$ and $f=1_{F}$ with support inside $Q$, a map $\tau:Q\rightarrow\{\lambda_k\}$ is called an admissible stopping time if for any subcube $P\subset Q$ and some large constant $C$ with $\langle f\rangle_{P}>C\langle f\rangle_{Q}$ we have  $\min\limits_{x\in P}\tau(x)>\ell(P).$

 Observe that, for a suitable choice of constant $C$, we have $$\sum_{P\in \mathcal{Q}_{Q}}|P|\leq \frac{1}{4}|Q|,$$ and for an appropriate choice of admissible stopping time $\tau$, we get that 
	
\begin{eqnarray}\label{recursive} \left\langle\sup\limits_{\lambda_{k}\leq \ell(Q)}M_{\lambda_{k},\phi}^{\mathfrak{R}}f, g\right\rangle\leq \left\langle M_{\tau,\phi}^{\mathfrak{R}}f, g\right\rangle + \sum_{P\in \mathcal{Q}_{Q}}\left\langle\sup\limits_{\lambda_{k}\leq \ell(P)}M_{\lambda_{k},\phi}^{\mathfrak{R}}(f1_{3P}), (g1_{P})\right\rangle.
\end{eqnarray}
Therefore, the desired sparse bounds follow from the following estimate  
\begin{eqnarray}\label{equation4.4}\frac{1}{|Q|}\langle M_{\tau,\phi}^{\mathfrak{R}}f, g\rangle\lesssim\langle f\rangle_{Q}^{\frac{1}{p}}\langle g\rangle_{Q}^{\frac{1}{q}},
	\end{eqnarray}
along with a recursive argument on the second term in the equation~\eqref{recursive}. This will prove the theorem for characteristic functions. Thereafter, we can conclude the result for general class of functions using the interpolation of sparse bounds as in \cite[Lemma 4.1]{Lacey1}.
Therefore, in order to prove \Cref{Unique C-1-9}, we need to establish the estimate \eqref{equation4.4}
for a suitable stopping time $\tau$ with a large cube $Q$ such that $f=1_{F}$ is supported on $3Q$ and $g=1_{G}$ is supported on $Q$ and $ \left(\frac{1}{p}, \frac{1}{q}\right) \in S_n$. In this proof, we use a continuous sparse bound estimate that we prove in the last section.

 Let us fix a point $\left(\frac{1}{p}, \frac{1}{q}\right)\in S_n$. From the definition of $\mu_l$ it is clear that $\mu_l\approx l^{\alpha(l)}$ where $\alpha(l)\rightarrow \infty$ as $l\rightarrow \infty$. Therefore, for a constant $N_{\circ}>0$ we have that $\mu_{N^{\epsilon}}>N^d$ for $N>N_{\circ}$ and $d>1$. We claim that for sufficiently small $\epsilon\in (0,1)$, stopping time $\tau$ taking values in $\left\{\lambda_k\in\Lambda:\lambda_k \text{ is a sparse sequence}\right\}$, and a large natural number $N$, we can decompose the operator as 
 $$M_{\tau,\phi}^{\mathfrak{R}}f\leq \mathcal{T}_{1}f+\mathcal{T}_{2}f$$ such that 
\begin{eqnarray} \label{equation4.6}\langle \mathcal{T}_{1}f,g\rangle \lesssim N^{\epsilon}|Q|\langle f\rangle_{Q}^{\frac{1}{p}}\langle g\rangle_{Q}^{\frac{1}{q}},
	\end{eqnarray}
\begin{eqnarray} \label{equation4.7}\langle \mathcal{T}_{2}f,g\rangle \lesssim N^{-c_{\mathfrak{R}}+2}|Q|\langle f\rangle_{Q}^{\frac{1}{2}}\langle g\rangle_{Q}^{\frac{1}{2}}.
	\end{eqnarray}
Note that the constants in the inequalities above depend on $p$, $q$, and $\epsilon$. Likewise the previous section, estimates \eqref{equation4.6} and \eqref{equation4.7} imply that 
\begin{eqnarray} \label{equation4.8}|Q|^{-1}\langle M_{\tau,\phi}^{\mathfrak{R}}f, g\rangle\lesssim N^{\epsilon}\langle f\rangle_{Q}^{\frac{1}{p}}\langle g\rangle_{Q}^{\frac{1}{q}}+N^{-c_{\mathfrak{R}}+2}\langle f\rangle_{Q}^{\frac{1}{2}}\langle g\rangle_{Q}^{\frac{1}{2}}.
\end{eqnarray}
Observe that the choice $N=\langle f\rangle_{Q}^{\frac{p-2}{2p(c_{\mathfrak{R}}-2+\epsilon)}}\langle g\rangle_{Q}^{\frac{q-2}{2q(c_{\mathfrak{R}}-2+\epsilon)}}$ optimize the right hand side of \eqref{equation4.8}. Therefore, substituting the value of $N$ in \eqref{equation4.8} and taking $\epsilon$ very small, we get the desired estimate  \[|Q|^{-1}\langle M_{\tau,\phi}^{\mathfrak{R}}f, g\rangle\lesssim \langle f\rangle_{Q}^{\frac{1}{p}}\langle g\rangle_{Q}^{\frac{1}{q}}.\]
\subsection{Decomposition of $M_{\lambda_k,\phi}^{\mathfrak{R}}$:} For each $k\in\N$, we decompose the average in the following way
\begin{align*}
    M_{\tau(\cdot),\phi}^{\mathfrak{R}}f(x)&= 1_{\{\tau(\cdot)\leq\lambda_{[N^{\epsilon}]}\}}(x)M_{\tau(\cdot),\phi}^{\mathfrak{R}}f(x)+1_{\{\tau(\cdot)>\lambda_{[N^{\epsilon}]}\}}(x)M_{\tau(\cdot),\phi}^{\mathfrak{R}}f(x).
\end{align*}
We set $M_{1,1}f=1_{\{\tau(\cdot)\leq\lambda_{[N^{\epsilon}]}\}}(x)M_{\tau(\cdot),\phi}^{\mathfrak{R}}f(x)$. For a fixed $\lambda_k>\lambda_{[N^\epsilon]}$, let $M^{k}_{1,2}$ be the operator associated with the multiplier $$\Omega_{\lambda_k}(\xi)=\sum_{0\leq a<L}\sum_{b\in \mathbb{Z}^{n}_{L}}F_{L}(a,b)\widehat{\zeta_{2L}}\left(\xi-\frac{b}{L}\right)\widehat{d\sigma_{\mathfrak{R}}}\left(\lambda_k^{\frac{1}{d}}\left(\xi-\frac{b}{L}\right)\right),~~\text{where}~~L=N!.$$ 
We decompose the second term further as 
\begin{align*}
	&1_{\{\tau(\cdot)>\lambda_{[N^{\epsilon}]}\}}(x)M_{\tau(\cdot),\phi}^{\mathfrak{R}}f(x)\\
	\leq& \sup_{\lambda_k>\lambda_{[N^\epsilon]}}|M^{k}_{1,2}f(x)|+\sup_{\lambda_k>\lambda_{[N^\epsilon]}}|M^{k}_{2,1}f(x)|+\sup_{\lambda_k>\lambda_{[N^\epsilon]}}|M^{k}_{2,2}f(x)|+\sup_{\lambda_k>\lambda_{[N^\epsilon]}}|M^{k}_{2,3}f(x)|\\
	:=&M_{1,2}f(x)+M_{2,1}f(x)+M_{2,2}f(x)+M_{2,3}f(x),
\end{align*}
where the operators $M^{k}_{2,1}$, $M^{k}_{2,2}$ and $M^{k}_{2,3}$ are associated with the multipliers $m^{k}_{2,1}$, $m^{k}_{2,2}$ and $m^{k}_{2,3}$ respectively, defined as
\begin{align*}
	m^{k}_{2,1}(\xi)&=\widehat{w}_{\mathfrak{R},\lambda_k}(\xi)-\sum_{q=1}^{\infty}\sum_{a\in U_{q}}\sum_{b\in \mathbb{Z}_{q}^{n}}e_{q}(-\lambda_k a)F_{q}(a,b)\widehat{\zeta_{q}}\left(\xi-\frac{b}{q}\right)\widehat{d\sigma_{\mathfrak{R}}}\left(\lambda_k^{\frac{1}{d}}\left(\xi-\frac{b}{q}\right)\right),\\
	m^{k}_{2,2}(\xi)&=\Omega_{\lambda_k}(\xi)-\sum_{q=1}^{N}\sum_{a\in U_{q}}\sum_{b\in \mathbb{Z}_{q}^{n}}F_{q}(a,b)\widehat{\zeta_{q}}\left(\xi-\frac{b}{q}\right)\widehat{d\sigma_{\mathfrak{R}}}\left(\lambda_k^{\frac{1}{d}}\left(\xi-\frac{b}{q}\right)\right),\;\text{and}\\
	m^{k}_{2,3}(\xi)&=\sum_{q>N}\sum_{a\in U_{q}}\sum_{b\in \mathbb{Z}_{q}^{n}}e_{q}(-\lambda_k a)F_{q}(a,b)\widehat{\zeta_{q}}\left(\xi-\frac{b}{q}\right)\widehat{d\sigma_{\mathfrak{R}}}\left(\lambda_k^{\frac{1}{d}}\left(\xi-\frac{b}{q}.\right)\right)
\end{align*}
The rest of the proof is devoted to establishing the estimate \eqref{equation4.6} for terms $M_{1,1}$ and $M_{1,2}$ and the estimate \eqref{equation4.7} for terms $M_{2,1}$, $M_{2,2}$, and $M_{2,3}$.
\subsection{\textbf{Estimate for $M_{1,1}$}} Note that by the estimate for single averages from \Cref{Unique C-4-1} we have that
\[\langle M_{\lambda_k,\phi}^{\mathfrak{R}}f,g\rangle\lesssim |Q|\langle f\rangle_{Q,p}\langle g\rangle_{Q,q},\quad k\in\N.\] 
Therefore, 
\begin{align*}
    \langle M_{1,1}f,g\rangle&\leq \langle \sup\limits_{\lambda_{k}\leq \lambda_{[N^{\epsilon}]}} M_{\lambda_{k},\phi}^{\mathfrak{R}}f,g\rangle \\&\leq\sum_{k\leq [N^{\epsilon}]}\langle  M_{\lambda_{k},\phi}^{\mathfrak{R}}f,g\rangle \\
    &\leq N^{\epsilon} |Q|\langle f\rangle_{Q,p}\langle g\rangle_{Q,q}.
\end{align*}

\subsection{\textbf{Estimate for $M_{1,2}$}} We write the multiplier $\Omega_{\lambda_k}(\xi)=v_{\lambda_k}(\xi)s(\xi)$, where
\begin{align*}
	v_{\lambda_k}(\xi)&=\sum_{b\in\mathbb{Z}^{n}_{L}}\widehat{\zeta_{L}}\left(\xi-\frac{b}{L}\right)\widehat{d\sigma_{\mathfrak{R}}}\left({\lambda_k^{\frac{1}{d}}}\left(\xi-\frac{b}{L}\right)\right),\\
	\text{and}\quad s(\xi)&=\sum_{0\leq a<L}\sum_{b\in\mathbb{Z}^{n}_{L}}F_{L}(a,b)\widehat{\zeta_{2L}}\left(\xi-\frac{b}{L}\right).
\end{align*}
Let $V_{\lambda_k}$ and $S$ be the operators associated with the multipliers $v_{\lambda_k}$ and $s$ respectively. In order to prove estimates on the maximal operator corresponding to $v_{\lambda_k}$, we require a transference result from \cite{magyar}. In order to state this result we need the following setting. 

Let $K$ be a suitable distribution kernel and consider the convolution operator $Tf(x)=f* K(x)$. Denote $\widehat{K}(\xi)=m(\xi)$, thus $(Tf)^{\wedge}(\xi)=m(\xi)\widehat{f}(\xi)$. By restricting $K$ to $\Z^n$, i.e., $K_{dis}=K\vert_{\mathbb{Z}^{n}},$ consider the discrete operator $T_{dis}(f)=f\star K_{dis}$. Assume that $m(\xi)$ is supported in $\left[-\frac{1}{2q},\frac{1}{2q}\right]^{n},$ where $q$ is a fixed integer $q \geq 1$. Consider the periodic function
\[\tilde{m}^{q}(\xi)=\sum_{b\in\mathbb{Z}^{n}}m\left(\xi-\frac{b}{q}\right).\] 
We shall require Banach space valued transference result. Therefore, we shall assume that $B$ is a finite dimensional Banach space and $m$ is a $B$-valued bounded measurable function. With these notation, we recall the transference result from \cite{magyar}. 
\begin{proposition}\label{Unique C-3-9}	\cite{magyar}
	Let $T$ and $T_{dis}$ be the operators defined as above. Assume that $T$ is a bounded operator from $L^{p}(\mathbb{R}^{n})\rightarrow L^{p}_B(\mathbb{R}^{n})$.  Then\[\|T_{dis}^{q}\|_{\ell^p\rightarrow\ell^p_B}\lesssim \|T\|_{L^p\rightarrow L^p_B}.\] 
   \end{proposition}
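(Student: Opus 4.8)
The plan is to lift the boundedness of the continuous operator $T$ on $L^p_B(\R^n)$ to its periodized, sampled version $T_{dis}^q$ on $\ell^p_B(\Z^n)$ by comparing the discrete operator with a continuous model acting on functions that are piecewise constant at scale $1$. First I would set up the comparison map: given $\psi\in\ell^p_B(\Z^n)$, define $\Psi\in L^p_B(\R^n)$ by $\Psi = \sum_{m\in\Z^n}\psi(m)\,1_{m+[0,1)^n}$, so that $\|\Psi\|_{L^p_B}=\|\psi\|_{\ell^p_B}$. The key structural observation is that, because $\widehat{K}=m$ is supported in $[-\tfrac{1}{2q},\tfrac{1}{2q}]^n$, the periodization $\tilde m^q(\xi)=\sum_{b\in\Z^n}m(\xi-\tfrac{b}{q})$ is exactly the symbol of $T_{dis}^q$ on $\T^n$, with the summands having disjoint supports modulo $\Z^n$; one checks this by Poisson summation / restriction of $K$ to the lattice. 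Thus the multiplier of $T^q_{dis}$ is obtained from $m$ by a clean periodization with no overlap, which is what makes the transference lossless in the constant.

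Next I would exploit the support condition once more on the Fourier side of the comparison. Let $\chi$ be a smooth cutoff equal to $1$ on $[-\tfrac{1}{2q},\tfrac{1}{2q}]^n$ and supported in, say, $[-\tfrac{1}{q},\tfrac{1}{q}]^n\subset(-\tfrac12,\tfrac12)^n$; since $m=m\chi$, acting with $T$ first restricts frequencies to this small cube. The plan is: start from $\psi$, build $\Psi$, apply $T$ to (a mild frequency-localized modification of) $\Psi$, and then sample back to $\Z^n$. Because $\widehat{\Psi}(\xi) = \widehat{\psi}(\xi)\prod_j \frac{e(\xi_j)-1}{2\pi i \xi_j}$ and the extra factor is bounded and bounded below on the support of $\chi$, one can absorb it (and its inverse) into bounded Fourier multipliers that commute with everything; these contribute only absolute constants. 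The sampling step uses that for a function whose Fourier transform is supported in $(-\tfrac12,\tfrac12)^n$, the $\ell^p(\Z^n)$ norm of its restriction to $\Z^n$ is comparable to its $L^p(\R^n)$ norm — this is the standard Plancherel--Pólya / sampling comparison, valid in the $B$-valued setting since $B$ is finite-dimensional (so one may argue coordinatewise, or cite the scalar result applied to $\|\cdot\|_B$ after a standard linearization). Stitching these together yields
\[
\|T^q_{dis}\psi\|_{\ell^p_B} \lesssim \|T\Phi\|_{L^p_B} \lesssim \|T\|_{L^p\to L^p_B}\,\|\Phi\|_{L^p_B}\lesssim \|T\|_{L^p\to L^p_B}\,\|\psi\|_{\ell^p_B},
\]
where $\Phi$ is the frequency-localized lift of $\psi$.

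I expect the main obstacle to be the bookkeeping at the two transition points — the passage from $\psi$ to its continuous lift and back — making sure that all the auxiliary bounded multipliers (the cutoff $\chi$, the box-spline factor $\prod_j\frac{e(\xi_j)-1}{2\pi i\xi_j}$ and its reciprocal on the relevant support) are genuinely $L^p$-bounded uniformly in $q$ and really do commute past $T$ so that the single appearance of $\|T\|_{L^p\to L^p_B}$ is all that survives. In particular one must be careful that the support of the periodized symbol lies in a single fundamental domain, which is exactly where the hypothesis $\mathrm{supp}\,m\subset[-\tfrac{1}{2q},\tfrac{1}{2q}]^n$ is used, and that the sampling comparison constant does not degrade with $q$. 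Since all of this is classical (it is precisely Lemma 2.1 of Magyar--Stein--Wainger, here stated with a finite-dimensional Banach target, which changes nothing essential), I would keep the write-up brief and refer to \cite{magyar} for the scalar case, indicating only the (trivial) modifications needed for $B$-valued $m$.
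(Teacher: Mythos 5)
The paper does not prove this proposition at all: it is quoted verbatim from Magyar--Stein--Wainger \cite{magyar}, and your outline is essentially their transference argument (lift a sequence to a band-limited/step-function model, apply the continuous operator, sample back), so in approach you are aligned with the cited source. Two points deserve correction, though. First, the inclusion you invoke, $[-\tfrac{1}{q},\tfrac{1}{q}]^n\subset(-\tfrac12,\tfrac12)^n$, is false for $q=1,2$, and $q=1$ (multiplier supported in the full fundamental domain $[-\tfrac12,\tfrac12]^n$) is exactly the base case one must handle; the sampling inequality $\|F\vert_{\Z^n}\|_{\ell^p}\lesssim\|F\|_{L^p}$ still holds there by convolving with a fixed Schwartz function whose Fourier transform equals $1$ on the closed unit cube, so the fix is harmless but should be made. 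Second, the uniformity in $q$ that you flag as the delicate bookkeeping issue is most cleanly obtained not by tracking constants of cutoffs at scale $1/q$, but by observing that the symbol $\tilde m^{q}$ is $\tfrac1q$-periodic, hence the discrete kernel is supported on $q\Z^n$; decomposing $\Z^n$ into cosets of $q\Z^n$ and rescaling reduces the statement to the $q=1$ case, with the continuous multiplier norm unchanged under dilation. With these adjustments your sketch is a correct reconstruction of the proof in \cite{magyar}, and since $B$ is finite dimensional the vector-valued extension is indeed routine.
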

Note that we shall use the result for maximal function defined by taking supremum with respect to the parameter $\lambda_k$. The transference result above is applied with Banach space $B$ of $\ell^{\infty}$ functions in the parameter $\lambda_k$. We shall take only finitely many $\lambda_k'$s and prove uniform estimates. A standard limiting argument can be used to deduce the corresponding results for the maximal function associated with the full sequence $\lambda_k$. 

Along with the transference result described above, we also require the following $\ell^p$-estimates for the operator $S$ in order to prove the desired estimate for the term $M_{1,2}$.
\begin{lemma}\label{Unique C-4-3}
	The operator $S$ is bounded on $\ell^{p}(\mathbb{Z}^{n})$ for $1\leq p\leq \infty.$
\end{lemma}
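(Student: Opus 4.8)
The multiplier $s(\xi)=\sum_{0\le a<L}\sum_{b\in\Z^n_L}F_L(a,b)\widehat{\zeta_{2L}}\bigl(\xi-\tfrac{b}{L}\bigr)$ is a sum of bump functions, one around each rational point $b/L$, with weights given by the complete exponential sums $\sum_{0\le a<L}F_L(a,b)$. The first observation is that $\sum_{0\le a<L}F_L(a,b)$ is itself an arithmetic object: writing out $F_L(a,b)=L^{-n}\sum_{m\in\Z_L^n}e\bigl(\mathfrak R(m)\tfrac aL+\tfrac{m\cdot b}{L}\bigr)$ and summing in $a$ over the full residue system $\{0,\dots,L-1\}$ collapses the sum by orthogonality to $L$ times the count of $m\in\Z_L^n$ with $\mathfrak R(m)\equiv 0\pmod L$ and $m\equiv$ (something compatible) — concretely one gets $\sum_{a}F_L(a,b)=\#\{m\in\Z_L^n:\ \mathfrak R(m)\equiv 0,\ b\equiv 0\}$-type expression. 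I expect this forces the $b$-sum to be supported (or at least dominated) by $b\equiv 0\pmod{L'}$ for an appropriate divisor, so that $s$ is essentially a single bump $\widehat{\zeta_{2L}}(\xi)$ times a bounded constant, up to a manageable remainder. The cleanest route, and the one I would take, is\emph{not} to evaluate the sum exactly but to bound the operator directly.}

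\textbf{Step 1: Reduce to a kernel estimate.} Since $S$ is a convolution operator, it suffices to show its kernel $K_S=s^\vee$ satisfies $\|K_S\|_{\ell^1(\Z^n)}\lesssim 1$ with a bound independent of $N$ (hence of $L=N!$); this gives boundedness on $\ell^1$ and $\ell^\infty$, and the full range $1\le p\le\infty$ follows by interpolation (and duality). Computing the inverse Fourier transform termwise, $K_S(x)=\sum_{b\in\Z_L^n}\Bigl(\sum_{0\le a<L}F_L(a,b)\Bigr)e\bigl(\tfrac{b\cdot x}{L}\bigr)(2L)^{n}\zeta(2Lx)$ — note that $\widehat{\zeta_{2L}}(\xi-b/L)$ has inverse transform $(2L)^{-n}\zeta\bigl(\tfrac{x}{2L}\bigr)e(b\cdot x/L)$, up to the normalization conventions fixed in the notations — so after collecting constants $K_S(x)=(2L)^{-n}\,\zeta\!\bigl(\tfrac{x}{2L}\bigr)\,G_L(x)$, where $G_L(x)=\sum_{b\in\Z_L^n}\bigl(\sum_a F_L(a,b)\bigr)e(b\cdot x/L)$ is a periodic (period $L$ in each coordinate) function. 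Carrying out the $a$-sum first inside $G_L$, orthogonality in $a$ over $\Z_L$ produces $G_L(x)=L\cdot\#\{m\in\Z_L^n:\ \mathfrak R(m)\equiv 0 \pmod L\ \text{and}\ m\equiv x \pmod L\}=L\cdot 1_{\{\mathfrak R(x)\equiv 0\,(L)\}}$, using $\sum_{b}e(b\cdot(m-x)/L)=L^n 1_{m\equiv x}$. Thus $G_L$ is simply $L$ times the indicator of the congruence variety, and in particular $|G_L(x)|\le L$ pointwise.

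\textbf{Step 2: Sum the kernel.} With $|K_S(x)|\le (2L)^{-n}\,|\zeta(x/2L)|\,L\cdot 1_{\{\mathfrak R(x)\equiv 0\,(L)\}}$, the $\ell^1$ norm is $\|K_S\|_{\ell^1}\lesssim L^{1-n}\sum_{x\in\Z^n}|\zeta(x/2L)|\,1_{\{\mathfrak R(x)\equiv 0\,(L)\}}$. Since $\zeta$ is Schwartz, $|\zeta(x/2L)|\lesssim_M (1+|x|/2L)^{-M}$ for any $M$; partitioning $\Z^n$ into residue classes mod $L$ and using that in each translated cube of side $L$ the number of solutions of $\mathfrak R(x)\equiv 0\pmod L$ is $\lesssim L^{n-1}$ (this is exactly the Birch-rank estimate $|F_q(a,b)|\lesssim q^{-c_{\mathfrak R}}$ with $a=0$ combined with the elementary divisor bounds used in \cites{Birch,Magyar2,Cook&Hughes1}; more simply it is the standard bound on solutions of a nonzero polynomial congruence given that $\mathfrak R$ is $\phi$-regular), a geometric sum over shells of radius $\sim 2^j L$ yields $\sum_{x}|\zeta(x/2L)|1_{\{\mathfrak R(x)\equiv 0\,(L)\}}\lesssim \sum_{j\ge 0}2^{-jM}(2^jL)^n\cdot L^{-1}\cdot\#\text{(cubes)}\lesssim L^{n-1}$. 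Plugging in gives $\|K_S\|_{\ell^1}\lesssim L^{1-n}\cdot L^{n-1}=1$, uniformly in $N$.

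\textbf{Main obstacle.} The only delicate point is the counting estimate in Step 2: one needs that the number of $x$ in a box of side $L$ with $\mathfrak R(x)\equiv 0\pmod L$ is $O(L^{n-1})$ \emph{with implied constant independent of $L$}. For $L=N!$ this is not a single prime modulus but a highly composite one, so one must either invoke multiplicativity of the solution count together with the Birch-rank-based bounds on $F_q(a,b)$ recorded above (which are uniform in $q$ because $\mathfrak R$ is $\phi$-regular), or appeal directly to the Weyl-sum machinery of \cites{Birch,Magyar2}. I would handle this by writing the solution count as $L^{-1}\sum_{a=0}^{L-1}\sum_{x\bmod L}e(a\mathfrak R(x)/L)=L^{n-1}\sum_{a=0}^{L-1}F_L(a,0)$ and splitting the $a$-sum into the main term $a=0$ (contributing $L^{n-1}$) and the terms with $(a,L)=q<L$, bounding the latter by $\sum_{q\mid L,\,q>1}\phi(q)\,L^{n}q^{-c_{\mathfrak R}}\lesssim L^n\sum_{q>1}q^{1-c_{\mathfrak R}}\lesssim L^n$ — which is too lossy by a factor $L$, so in fact one must be slightly more careful and use the sharper form of the minor-arc bound (the full strength of $|F_q(a,b)|\lesssim q^{-c_{\mathfrak R}}$ summed against the number of $a$ with a given gcd) exactly as in \cite[Section 3]{Cook&Hughes1}; I expect this reproduces the clean $O(L^{n-1})$ bound and closes the argument.
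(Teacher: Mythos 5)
Your argument is essentially the paper's proof: you compute the kernel $s^{\vee}$ by orthogonality in $a$ and $b$, reducing it to $L^{1-n}\,\zeta(\cdot/2L)\,1_{\{\mathfrak{R}(x)\equiv 0 \bmod L\}}$ (up to the harmless factor $2$), and then bound its $\ell^{1}$ norm via the uniform count of $O(L^{n-1})$ solutions of $\mathfrak{R}(x)\equiv 0 \pmod L$ in each box of side $L$, concluding by interpolation between $p=1$ and $p=\infty$. The only difference is that the paper imports this counting bound as a black box (\Cref{cookexp}, i.e.\ \cite[Lemma 14]{cook1}), whereas you sketch its derivation from $|F_{q}(a,b)|\lesssim q^{-c_{\mathfrak{R}}}$ via the gcd decomposition, which indeed closes once you restore the factor $\frac{1}{L}$ dropped in your final display, giving $L^{n-1}\sum_{q\mid L}\phi(q)q^{-c_{\mathfrak{R}}}\lesssim L^{n-1}$ since $c_{\mathfrak{R}}>2$.
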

Let us first observe that by assuming the lemma above, we can conclude proof of the estimate for the term $M_{1,2}$. Indeed, for $\left(\frac{1}{p},\frac{1}{q}\right)\in V_n$, \Cref{Unique C-4-6} together with \Cref{Unique C-3-9} implies that 
\begin{align*}
	\langle M_{1,2}f,g\rangle&\leq\langle\sup\limits_{\lambda_{k}}|V_{\lambda_{k}}\circ Sf|,g\rangle\\
	&\lesssim|Q|\langle Sf\rangle_{Q,p}\langle g\rangle_{Q,q}\\
	&\lesssim|Q|\langle f\rangle_{Q,p}\langle g\rangle_{Q,q},
\end{align*}
	where we used \Cref{Unique C-4-3} in the last step.
\subsection{Proof of \Cref{Unique C-4-3}} We need the following estimate from \cite[Lemma 14]{cook1}.
	\begin{lemma}[\cite{cook1}]\label{cookexp}
		Let $L\in\N$ and $\mathcal{B}(\mathfrak{R})>2^d(d-1)$. Then,
		\[\frac{1}{L^{n-1}}\sum_{x\in\mathbb{Z}^{n}_{L}}1_{\{\mathfrak{R}(x)\equiv 0\text{ mod }L\}}(x)\lesssim 1.\]
	\end{lemma}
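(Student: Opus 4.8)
The plan is to count the solutions by orthogonality of additive characters and then control the resulting complete exponential sums with the Weyl-type bound $|F_q(a,b)|\lesssim q^{-c_{\mathfrak{R}}}$ (valid for $(a,q)=1$) recalled in \Cref{sec:single}. Writing $N(L)=\#\{x\in\Z^n_L:\mathfrak{R}(x)\equiv 0 \bmod L\}$, the identity $\frac1L\sum_{a=0}^{L-1}e_L(am)=1_{\{L\mid m\}}$ together with the isolation of the term $a=0$ gives
\[
N(L)=\frac1L\sum_{a=0}^{L-1}\sum_{x\in\Z^n_L}e_L(a\mathfrak{R}(x))=L^{n-1}+\frac1L\sum_{a=1}^{L-1}S_L(a),\qquad S_L(a):=\sum_{x\in\Z^n_L}e_L(a\mathfrak{R}(x))=L^{n}F_L(a,0).
\]
Hence the lemma reduces to the bound $\sum_{a=1}^{L-1}|S_L(a)|\lesssim L^{n}$ on the error part (the case $L=1$ being trivial, since then the left-hand side of the lemma is $1$).

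The one genuine difficulty is that the Weyl estimate applies only when $a$ is coprime to the modulus, so I would first extract the common factor. Fix $a\in\{1,\dots,L-1\}$, set $g=\gcd(a,L)$, and write $a=ga'$, $L=gL'$ with $(a',L')=1$ and $L'\ge 2$. Since $a/L=a'/L'$, the quantity $e_{L'}(a'\mathfrak{R}(x))$ depends only on $x\bmod L'$ (as $\mathfrak{R}$ has integer coefficients), while the reduction map $\Z^n_L\to\Z^n_{L'}$ is exactly $g^n$-to-one; collapsing the fibres yields
\[
S_L(a)=g^n\sum_{y\in\Z^n_{L'}}e_{L'}(a'\mathfrak{R}(y))=g^n (L')^{n}F_{L'}(a',0).
\]
Now the Weyl bound is legitimate, and using $L'=L/g$ we get $|S_L(a)|\lesssim g^n (L/g)^{n-c_{\mathfrak{R}}}=g^{c_{\mathfrak{R}}}L^{n-c_{\mathfrak{R}}}$.

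It then remains to sum this over $a$, which is where the rank hypothesis enters. Grouping the frequencies according to the value $g=\gcd(a,L)$ and using $\#\{1\le a\le L-1:\gcd(a,L)=g\}\le\varphi(L/g)\le L/g$, we obtain
\[
\sum_{a=1}^{L-1}|S_L(a)|\lesssim L^{n-c_{\mathfrak{R}}}\sum_{g\mid L}\frac{L}{g}\,g^{c_{\mathfrak{R}}}=L^{n-c_{\mathfrak{R}}+1}\sum_{g\mid L}g^{c_{\mathfrak{R}}-1}=L^{n}\sum_{g\mid L}(g/L)^{c_{\mathfrak{R}}-1}\le L^{n}\sum_{m=1}^{\infty}m^{-(c_{\mathfrak{R}}-1)}.
\]
The hypothesis $\mathcal{B}(\mathfrak{R})>2^{d}(d-1)$ is used precisely here: it forces $c_{\mathfrak{R}}=\frac{\mathcal{B}(\mathfrak{R})}{(d-1)2^{d-1}}>2$, so that $c_{\mathfrak{R}}-1>1$ and the series converges to a constant independent of $L$. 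Combining with the main term gives $N(L)\lesssim L^{n-1}$, i.e. $\frac{1}{L^{n-1}}\sum_{x\in\Z^n_L}1_{\{\mathfrak{R}(x)\equiv 0\bmod L\}}\lesssim 1$, as claimed. A possible $\varepsilon$-loss in the Weyl bound is harmless since $c_{\mathfrak{R}}>2$ strictly: one simply runs the argument with $c_{\mathfrak{R}}-\varepsilon>2$ in place of $c_{\mathfrak{R}}$. The main obstacle throughout is the handling of the non-primitive frequencies $a$, and the fibre-collapsing identity $S_L(a)=g^n(L')^{n}F_{L'}(a',0)$ is exactly the device that reduces everything to the known Weyl estimate.
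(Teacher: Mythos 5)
Your proof is correct. The paper itself does not prove this lemma --- it is imported verbatim from \cite{cook1} --- but your argument (orthogonality of additive characters; stratification of the frequencies $a$ by $g=\gcd(a,L)$, which collapses the complete sum $S_L(a)$ to the primitive Weyl sum $g^n(L/g)^nF_{L/g}(a/g,0)$; and then the Birch bound $|F_q(a,b)|\lesssim q^{-c_{\mathfrak{R}}}$ with $c_{\mathfrak{R}}>2$ to make the divisor sum $\sum_{m\mid L}m^{-(c_{\mathfrak{R}}-1)}$ converge uniformly in $L$) is the standard one and is essentially the content of the cited result. The one point requiring care --- uniformity of the implied constant and a possible $\varepsilon$-loss in the Weyl estimate --- you address correctly, since $c_{\mathfrak{R}}-1>1$ holds with room to spare.
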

We shall prove the claimed $\ell^p$ estimate at $p=1$ and $p=\infty.$ The general case follows by standard interpolation argument. Note that it is enough to show $\|s^\vee\|_1\lesssim1$. Consider 
\begin{align*}
	s^{\vee}(-x)&=\sum_{0\leq a<L}\sum_{b\in \mathbb{Z}_{L}^{n}}F_{L}(a,b)\int_{\mathbb{T}^{n}}\widehat{\zeta_{L}}(\xi-\frac{b}{L})e^{i(-x)\cdot\xi}d\xi\\ 
	&=\sum_{0\leq a<L}\sum_{b\in \mathbb{Z}_{L}^{n}}F_{L}(a,b)\int_{\mathbb{T}^{n}}\widehat{\zeta_{L}}(\xi-\frac{b}{L})e^{-ix\cdot(\xi-\frac{b}{L})}e^{-x\frac{b}{L}}d\xi\\
	&=\zeta_{L}(x)\sum_{0\leq a<L}\frac{1}{L^{n}}\sum_{m\in \mathbb{Z}_{L}^{n}}\sum_{b\in \mathbb{Z}_{L}^{n}}e_{L}(a\mathfrak{R}(m)+(m-x)\cdot b)\\
	&=\zeta_{L}(x)\frac{1}{L^{n-1}}\sum_{m\in \mathbb{Z}_{L}^{n}}\sum_{b\in \mathbb{Z}_{L}^{n}}e_{L}((m-x)\cdot b)1_{\{\mathfrak{R}(m)\equiv 0\text{ mod }L\}}(x)\\
	&=\frac{\zeta_{L}(x)}{L^{n-1}}\sum_{b\in \mathbb{Z}_{L}^{n}}1_{\{\mathfrak{R}(x)\equiv 0\text{ mod }L\}}(x)\\
	&=L\zeta_{L}(x)1_{\{\mathfrak{R}(x)\equiv 0\text{ mod }L\}}(x)\\
	&=\frac{\zeta(x/L)}{L^{n-1}}1_{\{\mathfrak{R}(x)\equiv 0\text{ mod }L\}}(x),
\end{align*}
Since $\zeta$ is a Schwartz class function, we have that
\begin{align*}
	\frac{1}{L^{n-1}}|\zeta(x/L)1_{\{\mathfrak{R}(x)\equiv 0\text{ mod }L\}}|&\lesssim  \frac{1}{L^{n-1}}\frac{1}{\left(\frac{|x|}{L}\right)^{n+1}}\lesssim \frac{L^2}{\|x\|_{\ell^{2}}^{n+1}}.
\end{align*} 
Therefore,
\[\frac{1}{L^{n-1}}\sum_{\|x\|_{\ell^{\infty}}\geq L}\zeta(x/L)1_{\{\mathfrak{R}(x)\equiv 0\text{ mod }L\}}\lesssim L^{2}\sum_{\|x\|_{\ell^2}\geq L}\frac{1}{\|x\|_{\ell^2}^{n+1}}\lesssim1.\]
This implies that 
\begin{align*}
	\|s^\vee \|_{\ell^{1}}&\leq\sum_{x\in\mathbb{Z}^{n}}|s^{\vee}(x)|\\
	&\leq\sum_{x\in\mathbb{Z}^{n}}\frac{|\zeta(x/L)|}{L^{n-1}}1_{\{\mathfrak{R}(x)\equiv 0\text{ mod }L\}}(x)\\
	&\lesssim\Big(\frac{1}{L^{n-1}}\sum_{x\in\mathbb{Z}^{n}_{L}}1_{\{\mathfrak{R}(x)\equiv 0\text{ mod }L\}}(x)+\frac{1}{L^{n-1}}\sum_{\|x\|_{\ell^{\infty}}\geq L}|\zeta(x/L)|1_{\{\mathfrak{R}(x)\equiv 0\text{ mod }L\}}(x)\Big).
\end{align*}
Invoking \Cref{cookexp} for the estimate of the first term in the above along with the estimate we get the desired result. This concludes the proof of \Cref{Unique C-4-3}.
\end{proof}
\subsection{\textbf{Estimate for $M_{2,1}$}}
From the Fourier transform estimate \eqref{equation4.3} we obtain that
\[\|M^{k}_{2,1}f\|_{\ell^{2}}\lesssim \lambda_k^{-\eta_{\mathfrak{R}}}\|f\|_{\ell^2},\]
and hence 
\begin{align*}
	\|M_{2,1}f\|_{\ell^{2}}&\lesssim \Big(\sum_{\lambda_{k}>\lambda_{[N^\epsilon]}}\lambda_k^{-\eta_{\mathfrak{R}}}\Big)\|f\|_{\ell^2}\\
	&\lesssim\lambda_{N^\epsilon}^{-\eta_{\mathfrak{R}}}\|f\|_{\ell^2} \lesssim N^{-(c_{\mathfrak{R}}-2)}\|f\|_{\ell^{2}}.
\end{align*}
\subsection{\textbf{Estimate for $M_{2,2}$}}	
We have,
\begin{align*}
	m^{k}_{2,2}(\xi)&=\Omega_{\lambda_{k}}(\xi)-\sum_{q=1}^{N}\sum_{a\in U_{q}}\sum_{b\in \mathbb{Z}_{q}^{n}}F_{q}(a,b)\widehat{\zeta_{q}}\left(\xi-\frac{b}{q}\right)\widehat{d\sigma_{\mathfrak{R}}}\left(\lambda_{k}^{\frac{1}{d}}\left(\xi-\frac{b}{q}\right)\right)\\
	&=\sum_{0\leq \bar{a}<L}\sum_{\bar{b}\in \mathbb{Z}^{n}_{L}}F_{L}(\bar{a},\bar{b})\widehat{\zeta_{2L}}\left(\xi-\frac{b}{L}\right)\widehat{d\sigma_{\mathfrak{R}}}\left(\lambda_{k}^{\frac{1}{d}}\left(\xi-\frac{b}{L}\right)\right)\\&\hspace{1.5cm} -\sum_{q=1}^{N}\sum_{a\in U_{q}}\sum_{b\in \mathbb{Z}_{q}^{n}}F_{q}(a,b)\widehat{\zeta_{q}}\left(\xi-\frac{b}{q}\right)\widehat{d\sigma_{\mathfrak{R}}}\left(\lambda_{k}^{\frac{1}{d}}\left(\xi-\frac{b}{q}\right)\right).
\end{align*}
We note that for any function $g:\left\{\frac{a}{L}: 0\leq a<L\right\}\to\R$ we have the identity
\[\sum_{a\in\mathbb{Z}_{L}}g\left(\frac{a}{L}\right)=\sum_{1\leq q\leq N}\sum_{a\in U_{q}}g\left(\frac{a}{q}\right)+\sum_{\substack{q:q|L\\q>N}}\sum_{a\in U_{q}}g\left(\frac{a}{q}\right).\]
Using the identity as above, we write the multiplier as $m^{k}_{2,2}=m^{k}_{2,2,1}+m^{k}_{2,2,2}$, where
\begin{align*}
	m^{k}_{2,2,1}(\xi)&=\sum_{0\leq \bar{a}<L}\sum_{\substack{\bar{b}\in \mathbb{Z}^{n}_{L}\\ \frac{L}{\nu} \leq N}}F_{L}(\bar{a},\bar{b})\left( \widehat{\zeta_{2L}}\left(\xi-\frac{\bar{b}}{L}\right)-\widehat{\zeta_{\frac{L}{\nu}}}\left(\xi-\frac{\bar{b}}{L}\right) \right)\widehat{d\sigma_{\mathfrak{R}}}\left(\lambda_{k}^{\frac{1}{d}}\left(\xi-\frac{\bar{b}}{L}\right)\right),\\
	\text{and}\quad m^{k}_{2,2,2}(\xi)&=\sum_{0\leq \bar{a}<L}\sum_{\substack{\bar{b}\in \mathbb{Z}^{n}_{L}\\ \frac{L}{\nu}> N}}F_{L}(\bar{a},\bar{b})\widehat{\zeta_{2L}}\left(\xi-\frac{\bar{b}}{L}\right)\widehat{d\sigma_{\mathfrak{R}}}\left(\lambda_{k}^{\frac{1}{d}}\left(\xi-\frac{\bar{b}}{L}\right)\right),
\end{align*}
with the index $\nu=\text{gcd}(\bar{a}, \bar{b}, L)$ for which we have $F_{L}(\bar{a}, \bar{b})=F_{\frac{L}{\nu}}(\frac{\bar{a}}{\nu}, \frac{\bar{b}}{\nu})$. Thus we have
\[M_{2,2}f(x)\leq M_{2,2,1}f(x)+M_{2,2,2}f(x),\]
where $M_{2,2,1}$ and $M_{2,2,2}$ are the operators defined as
\[M_{2,2,j}f(x)=\sup_{\lambda_k>\lambda_{[N^\epsilon]}}|(m^k_{2,2,j})^\vee\star f(x)|,\;j=1,2.\]
\subsection{Estimate for $M_{2,2,1}$:} Observe that  $\widehat{\zeta_{2L}}(\xi-\frac{\bar{b}}{L})-\widehat{\zeta_{\frac{L}{\nu}}}(\xi-\frac{\bar{b}}{L})$ is supported for $|2L(\xi-\frac{\bar{b}}{L})|\geq \frac{1}{2}$; i.e., $|\xi-\frac{\bar{b}}{L}|\geq \frac{1}{4L}$. Therefore, in the support of $\widehat{\zeta_{2L}}(\xi-\frac{\bar{b}}{L})-\widehat{\zeta_{\frac{L}{\nu}}}(\xi-\frac{\bar{b}}{L})$ the term  $|\widehat{d\sigma_{\mathfrak{R}}}(\lambda^{\frac{1}{d}}_{k}(\xi-\frac{\bar{b}}{L}))|$ is bounded by $\left(\frac{L}{\lambda_k^{\frac{1}{d}}}\right)^{c_{\mathfrak{R}}-1}$. Also, for $k>N_{\circ}$, $\lambda_k>N^{d}!>(N!)^{d}=L^{d}$.
\begin{align*}
	\|M_{2,2,1}f\|_{\ell^{2}}&\lesssim\sum_{\lambda_{k}>\lambda_{N^{\epsilon}}}\left(\sup_{\xi}|m^{k}_{2,2,1}(\xi)|\right)\|f\|_{\ell^{2}}\\
	&\lesssim\|f\|_{\ell^{2}}\sum_{0\leq \bar{a}<L}\sum_{\lambda_{k}>\lambda_{N^{\epsilon}}}\left(\frac{L}{\lambda_k^{\frac{1}{d}}}\right)^{c_{\mathfrak{R}}-1}\frac{1}{L^{c_{\mathfrak{R}}}}\\
	&\lesssim\|f\|_{\ell^{2}}\sum_{0\leq \bar{a}<L}\frac{1}{L(\lambda_{N^{\epsilon}}^{\frac{1}{d}})^{c_{\mathfrak{R}}-2}}\\
	&\lesssim\frac{1}{L^{c_{\mathfrak{R}}-2}}\|f\|_{\ell^{2}}.
\end{align*}

\subsection{Estimate for $M_{2,2,2}$:} Consider
\begin{align*}
	m^{k}_{2,2,2}(\xi)&=\sum_{0\leq \bar{a}<L}\sum_{\substack{\bar{b}\in \mathbb{Z}^{n}_{L}\\ \frac{L}{\nu}> N}}F_{L}(\bar{a},\bar{b})\widehat{\zeta_{2L}}\left(\xi-\frac{\bar{b}}{L}\right)\widehat{d\sigma_{\mathfrak{R}}}\left(\lambda_{k}^{\frac{1}{d}}\left(\xi-\frac{\bar{b}}{L}\right)\right)\\&=\sum_{0\leq \bar{a}<L}\sum_{\substack{\bar{b}\in \mathbb{Z}^{n}_{L}\\ \frac{L}{\nu}> N}}F_{L}(\bar{a},\bar{b})\widehat{\zeta_{2L}}\left(\xi-\frac{\bar{b}}{L}\right)\widehat{\zeta_{L}}\left(\xi-\frac{\bar{b}}{L}\right)\widehat{d\sigma_{\mathfrak{R}}}\left(\lambda_{k}^{\frac{1}{d}}\left(\xi-\frac{\bar{b}}{L}\right)\right).
\end{align*}
Since $\widehat{\zeta_{L}}\left(\xi-\frac{b_1}{L}\right)$ and $\widehat{\zeta_{2L}}\left(\xi-\frac{b_2}{L}\right)$ have disjoint supports for $b_1\neq b_2$, we can write 
\[M^{k}_{2,2,2}=V_{\lambda_k}\circ S,\]
where $V_{\lambda_k}$ and $S$ are the operators corresponding to the multipliers $v_{\lambda_k}$ and $s$ respectively given by
\begin{align*}
	s(\xi)&=\sum_{0\leq \bar{a}<L}\sum_{\substack{\bar{b}\in \mathbb{Z}^{n}_{L}\\ \frac{L}{\nu}> N}}F_{L}(\bar{a},\bar{b})\widehat{\zeta_{2L}}\left(\xi-\frac{\bar{b}}{L}\right)\quad \text{and}\\
	v_{\lambda_k}(\xi)&=\sum_{\substack{\bar{b}\in \mathbb{Z}^{n}_{L}\\ \frac{L}{\nu}> N}}\widehat{\zeta_{L}}\left(\xi-\frac{\bar{b}}{L}\right)\widehat{d\sigma_{\mathfrak{R}}}\left({\lambda^{\frac{1}{d}}_k}\left(\xi-\frac{\bar{b}}{L}\right)\right).
\end{align*}  
Invoking \Cref{Unique C-3-9} and \Cref{Unique C-4-6}, we get that $\|\sup\limits_{\lambda_{k}}|V_{\lambda_k}f|\|_{\ell^{p}}\lesssim \|f\|_{\ell^{p}}$ for $p>1$; in particular, note that the  inequality as above holds true for $p=2$. This gives us 
\begin{align*}
		\|M_{2,2,2}f\|_{\ell^{2}}&\lesssim\|Sf\|_{\ell^2}\\
		&\lesssim\sup\limits_{\xi}|s(\xi)| \|f\|_{\ell^{2}}\\
		&\lesssim\sum_{0\leq\bar{a}<L}|F_L(\bar{a},\bar{b})|\|f\|_{\ell^2}\\&\lesssim\frac{1}{L^{c_{\mathfrak{R}}-1}} \|f\|_{\ell^{2}}.
		\end{align*}
\subsection{\textbf{Estimate for $M_{2,3}$}}
		Since $e_{q}(-\lambda_{k} a)=1$ for $\lambda_k>\lambda_{[N^\epsilon]}$ and $q>N$, we write the multiplier $m^{k}_{2,3}$ as 
\begin{align*}
	m^{k}_{2,3}(\xi)&=\sum_{q>N}\sum_{a\in U_{q}}\sum_{b\in 			\mathbb{Z}_{q}^{n}}F_{q}(a,b)\widehat{\zeta_{q}}\left(\xi-			\frac{b}{q}\right)\widehat{d\sigma_{\mathfrak{R}}}					\left(\lambda_{k}^{\frac{1}{d}}\left(\xi-\frac{b}{q}\right)			\right),\\
	&=\left(\sum\limits_{b\in \mathbb{Z}_{q}^{n}}						\widehat{\zeta_{q}}(\xi-\frac{b}{q})								\widehat{d\sigma_{\mathfrak{R}}}(\lambda_{k}^{\frac{1}{d}}			(\xi-\frac{b}{q}))\right)\left(\sum\limits_{a\in U_{q}}				\sum\limits_{b\in \mathbb{Z}_{q}^{n}}F_{q}(a,b)						\widehat{\zeta_{2q}}(\xi-\frac{b}{q})\right).
\end{align*}
		Using \Cref{Unique C-3-9} and \Cref{Unique C-4-6}, we get that the maximal operator corresponding to the first term in the expression above is bounded on $\ell^2$. Hence,
\begin{align*}
	\|M_{2,3}f\|_{\ell^{2}}&\lesssim \|f\|_{\ell^{2}}\sum_{q>N}			\sum_{a\in U_{q}}|F_{q}(a,b)|\\&\lesssim \|f\|_{\ell^{2}}			\sum_{q>N}\frac{1}{q^{c_{\mathfrak{R}}-1}}\\&\lesssim 				\frac{1}{N^{c_{\mathfrak{R}}-1}}\|f\|_{\ell^{2}}.
\end{align*}
	This concludes the proof of \Cref{Unique C-1-9}.

\begin{proof}[\textbf{Proof of \Cref{Unique C-1-8}:}]
	The proof is similar to that of \Cref{Unique C-1-9} with the exception that we use the following bound for the single scale average instead of \Cref{Unique C-4-1}. For any $\delta>0$, $\left(\frac{1}{p}, \frac{1}{q}\right) \in P_{n}$ and any cube $Q\subset\Z^n$, there exists $C>0$, independent of $\lambda$, such that
	\[\langle M_{\lambda}f,g\rangle\leq C\lambda^{\delta}\langle f\rangle_{Q,p}\langle g\rangle_{Q,q}|Q|,\;\forall\;f,g\;\text{supported in } Q.\]
	The estimate above is a direct consequence of the following $\ell^p-$improving property of $M_\lambda$ obtained in \cite{Hughes},
	\begin{lemma}[\cite{Hughes}]
	Let $n\geq 5$ and $\frac{n+1}{n-1}\leq p\leq 2$. Then for each $\delta>0$ there exists a constant $C_{p,\delta}$ such that for all $\lambda\in\N$, we have the following estimate 
	\[\|M_{\lambda}f\|_{\ell^{p'}}\lesssim_{p,\delta} \lambda^{\delta-n\left(\frac{2}{p}-1\right)}\|f\|_{\ell^p}.\]
	\end{lemma}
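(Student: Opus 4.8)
The plan is to follow Hughes's argument \cite{Hughes}, which runs closely parallel to the proof of \Cref{Unique C-4-1} and differs from it only in the arithmetic input used to control the error terms. First I would record the trivial bound $\|M_{\lambda}\|_{\ell^{2}\to\ell^{2}}\le1$ (the multiplier of $M_{\lambda}$ is bounded by $1$); since $\lambda^{\delta-n(2/p-1)}$ is, up to the $\lambda^{\delta}$, the interpolant of the constant $1$ at $p=2$ and of $\lambda^{-n(n-3)/(n+1)}$ at $p_{0}=\frac{n+1}{n-1}$ (where $p_{0}'=\frac{n+1}{2}$), Riesz--Thorin interpolation reduces the statement to the single endpoint estimate
\[
\|M_{\lambda}\|_{\ell^{p_{0}}\to\ell^{p_{0}'}}\ \lesssim_{\delta}\ \lambda^{\,\delta-n(n-3)/(n+1)},\qquad p_{0}=\tfrac{n+1}{n-1},
\]
equivalently the scale-free bound $\langle M_{\lambda}f,g\rangle\lesssim_{\delta}\lambda^{\delta}\,|E|\,\langle f\rangle_{E,p_{0}}\langle g\rangle_{E,p_{0}}$ for $f=1_{F}$, $g=1_{G}$ with $F,G\subseteq E=[0,\lambda]^{n}\cap\Z^{n}$, which is the standard reduction used in \Cref{sec:single}.

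To prove the endpoint I would decompose $M_{\lambda}$ exactly as in \Cref{sec:single}: fix a threshold $N$, peel off the small scales $1_{\{\lambda\le N\}}$, and split the remainder through the Magyar--Stein--Wainger expansion into a ``smooth'' major-arc part (rationals $b/q$ with $q\le N$, itself a main block and an annular block) and an ``error'' part consisting of the major-arc tail $q>N$ together with the minor-arc remainder. For the smooth blocks I would prove $\ell^{\infty}$-improving bounds of the shape $\langle\mathcal{T}f\rangle_{E,\infty}\lesssim N^{O(1)}\langle f\rangle_{E}$, exactly as for $M_{1,1}$ and $M_{1,2}$ in \Cref{sec:single}, using the counting $r(\lambda)\approx\lambda^{n-2}$ and the kernel decay $|\zeta_{t}\star d\sigma(x)|\lesssim t^{-1}(1+|x/t|)^{-2n}$; alternatively one may transfer each major-arc block to a continuous spherical average via \Cref{Unique C-3-9} and invoke the $L^{p}$-improving estimates of Littman and Strichartz.

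For the error blocks I would use Plancherel together with pointwise bounds on their multipliers, and this is the step at which the argument must diverge from \Cref{sec:single}. For a general Birch--Magyar form one has only the crude Weyl bound $|F_{q}(a,b)|\lesssim q^{-c_{\mathfrak{R}}}$ and Weyl differencing on the minor arcs, which is what produces the exponent $\eta_{\mathfrak{R}}$ and hence the triangle $S_{n}$. For $\mathfrak{R}(x)=|x|^{2}$ I would instead invoke Kloosterman's refinement of the circle method: square-root cancellation in the complete exponential sums $\sum_{a\in U_{q}}e_{q}(-a\lambda^{2})S(a;q)$ with $S(a;q)=\sum_{m\in\Z_{q}^{n}}e_{q}(a|m|^{2})$ --- these are Kloosterman/Sali\'e sums, controlled via the Weil bound up to a factor $q^{\epsilon}$ --- together with the corresponding strong minor-arc estimate for $|x|^{2}$. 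This upgrades each error estimate to $\langle\mathcal{T}f\rangle_{E,2}\lesssim_{\epsilon}N^{-\rho+\epsilon}\langle f\rangle_{E}^{1/2}$ for a $\rho$ large enough that, on optimizing in $N$ against the $\ell^{\infty}$-improving bounds (as in \Cref{sec:single}), the balance point moves out to $1/p_{0}=1/q_{0}=\frac{n-1}{n+1}$; the accumulated $q^{\epsilon}$ losses are absorbed into $\lambda^{\delta}$, which accounts for the presence of $\delta$ in the statement.

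The hard part is precisely this sharp arithmetic input for the form $|x|^{2}$ --- square-root cancellation in the Kloosterman/Sali\'e sums and the resulting strong minor-arc bound --- since it is the only ingredient that genuinely goes beyond the proof of \Cref{Unique C-4-1}. As the paper notes, no comparable refinement of the Kloosterman sum is presently available for a general Birch--Magyar form $\mathfrak{R}$; were one available, it would enlarge the triangle $S_{n}$ of \Cref{Unique C-4-1} in exactly the same fashion.
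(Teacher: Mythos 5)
There is nothing in the paper to compare your argument against: the lemma is imported verbatim from Hughes \cite{Hughes} and used as a black box in the proof of \Cref{Unique C-1-8}, so the paper itself supplies no proof. Your outline is a fair reconstruction of how the cited proof goes. The trivial $\ell^2$ bound plus Riesz--Thorin does reduce the statement to the endpoint $p_0=\frac{n+1}{n-1}$ (the exponent $\delta-n(\frac{2}{p}-1)$ interpolates exactly as you claim, with $\delta$ only shrinking), and the decisive ingredient is the one you name: Weil-type square-root cancellation in the complete Kloosterman/Sali\'e sums, which upgrades the error exponent from the $N^{-(n-4)/2}$ that the Gauss-sum bound $|F_q(a,b)|\lesssim q^{-n/2}$ alone would give to $N^{-(n-3)/2+\epsilon}$; balancing this against the $N^{2}$-type high bounds (as in the proof of \Cref{Unique C-4-1} with $d=2$) indeed places the balance point at $\frac{n-1}{n+1}$, with the $q^{\epsilon}$ losses absorbed into $\lambda^{\delta}$. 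As a proof, however, your text remains a plan: the sharp arithmetic input is invoked rather than established, and that is precisely the hard part, so the logical status of your argument is the same as the paper's, namely a reduction to \cite{Hughes}. One small caveat: the reduction to characteristic functions yields the vertex $p=\frac{n+1}{n-1}$ only in restricted (Lorentz) form, so the strong-type bound at the closed endpoint as stated requires Hughes's argument itself (or an additional interpolation exploiting the $\lambda^{\delta}$ room); this is harmless for the application, since \Cref{Unique C-1-8} only uses exponents in the open triangle $P_n$.
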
		
\end{proof}
\appendix
	\section{Sparse bounds for a continuous maximal operator}\label{Appendix}
	In this section we discuss the continuous analogue of the lacunary maximal function associated with the average $M_{\lambda_{k},\phi}^{\mathfrak{R}}$. As mentioned previously, due to non-availability of results in the desired framework in the literature; we present the proofs here. Let $\{\lambda_k\}_{k=1}^{\infty}$ be a lacunary sequence in the set of regular values $\Lambda$. Consider the lacunary maximal function defined by
	\[\mathcal{A}_{\mathfrak{R},*}f(x)=\sup\limits_{\lambda_k}\mathcal{A}_{\lambda_{k},\phi}^{\mathfrak{R}}f(x), \]
	where 
	\begin{equation*}
		\mathcal{A}_{\lambda_{k},\phi}^{\mathfrak{R}}f(x):=\frac{1}{\lambda_{k}^{\frac{n}{d}-1}}f*d\sigma_{\mathfrak{R},\lambda_k}(x)=f*d\bar{\sigma}_{\mathfrak{R},\lambda_k}(x),
	\end{equation*}
	where  $\phi$ and $\mathfrak{R}$ are as in \Cref{sec:intro}. \\
	We have the following sparse bound for the operator $\mathcal{A}_{\mathfrak{R},*}$, which were used to prove \Cref{Unique C-1-9}. 
	\begin{theorem}\label{Unique C-4-6}
		The sparse bound $\|\mathcal{A}_{\mathfrak{R},*}\|_{p,q}$ holds for all points $\left(\frac{1}{p},\frac{1}{q}\right)$ belonging to the interior of the triangle $V_n$ with vertices $(1,0)$, $\left(\frac{2c_{\mathfrak{R}}-1}{2c_{\mathfrak{R}}}, \frac{2c_{\mathfrak{R}}-1}{2c_{\mathfrak{R}}}\right)$, and $(0,1)$.
	\end{theorem}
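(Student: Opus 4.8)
The plan is to follow the standard high-low (single-scale vs. maximal) recursion scheme used in the continuous sparse domination literature (e.g.\ Lacey \cite{Lacey2}), reducing matters to two single-scale inputs: an $L^\infty \to L^\infty$ bound and an $\ell^p$-improving ($L^p\to L^q$) bound for the single-scale operator $\mathcal{A}_{\lambda,\phi}^{\mathfrak{R}}f = f*d\bar\sigma_{\mathfrak{R},\lambda}$, together with the decay of the Fourier transform $|\widehat{d\sigma_{\mathfrak{R}}}(\xi)|\lesssim (1+|\xi|)^{-(c_{\mathfrak{R}}-1)}$ from \cite[Lemma 6]{Magyar2}. First I would set up the recursion: fix a cube $Q$, let $f=1_F$, $g=1_G$ with $F\subset 3Q$, $G\subset Q$, pass to maximal subcubes where $\langle f\rangle$ jumps, and introduce an admissible stopping time $\tau$; this reduces the theorem to the single-scale stopping-time estimate $|Q|^{-1}\langle \mathcal{A}_{\tau,\phi}^{\mathfrak{R}}f,g\rangle \lesssim \langle f\rangle_{Q,p}\langle g\rangle_{Q,q}$ for $(\tfrac1p,\tfrac1q)$ in the interior of $V_n$, exactly as in the discrete proof of \Cref{Unique C-1-9}.

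Next I would perform the Littlewood--Paley/high-low split of the single (stopped) average. Writing $d\bar\sigma_{\mathfrak{R},\lambda} = \psi_{<\lambda^{1/d}t} * d\bar\sigma_{\mathfrak{R},\lambda} + (\delta_0-\psi_{<\lambda^{1/d}t})*d\bar\sigma_{\mathfrak{R},\lambda}$ for a suitable smooth bump $\psi$ at a scale $N$ to be optimized, the low-frequency piece is essentially an average against a nice $L^1$-normalized bump supported at scale $\sim N$, so it obeys $\langle \mathcal{T}_{\mathrm{low}}f\rangle_{Q,\infty}\lesssim N^{?}\langle f\rangle_Q$ (an $L^1\to L^\infty$-type estimate with the natural loss in $N$). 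The high-frequency piece has multiplier supported where $|\xi|\gtrsim N/\lambda^{1/d}$, and by the stationary-phase decay $|\widehat{d\sigma_{\mathfrak{R}}}(\xi)|\lesssim(1+|\xi|)^{-(c_{\mathfrak{R}}-1)}$ together with Plancherel one gets $\langle \mathcal{T}_{\mathrm{high}}f\rangle_{Q,2}\lesssim N^{-(c_{\mathfrak{R}}-1)}\langle f\rangle_Q^{1/2}$ (summing the dyadic pieces, using that the stopping time forces $\lambda^{1/d}\gtrsim \ell(Q)$ so the scales line up). To handle the supremum over the lacunary $\lambda_k$ rather than a single $\lambda$, I would use the square-function/$\ell^2$-sum trick over the lacunary scales — the decay $c_{\mathfrak{R}}-1>1$ makes the geometric series in the lacunary index converge — or equivalently run the high piece through an $L^2$ Sobolev/maximal estimate as in \cite{Cook&Hughes1}. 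Combining the two bounds and optimizing $N\sim [\langle f\rangle_Q \langle g\rangle_Q]^{-\theta}$ yields the diagonal point $\left(\tfrac{2c_{\mathfrak{R}}-1}{2c_{\mathfrak{R}}},\tfrac{2c_{\mathfrak{R}}-1}{2c_{\mathfrak{R}}}\right)$, and interpolation with the trivial vertices $(1,0)$ and $(0,1)$ (from $L^1\to L^1$ and $L^\infty\to L^\infty$ boundedness of the averages) fills out the open triangle $V_n$.

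I would then assemble the pieces: plug the single-scale stopping-time bound back into the recursion, iterate over the sparse family of stopping cubes, and invoke interpolation of sparse bounds (as in \cite[Lemma 4.1]{Lacey1}) to pass from characteristic functions to general $f,g$. The main obstacle, as usual in these arguments, is the high-frequency term: one must get genuinely scale-free control with the supremum over $\lambda_k$ inside, which requires carefully exploiting lacunarity (so the dyadic-in-$\lambda$ errors sum) and the quantitative decay exponent $c_{\mathfrak{R}}-1$ coming from the Birch rank hypothesis; everything else (the low term, the recursion, the interpolation) is routine. A secondary technical point is verifying that $d\sigma_{\mathfrak{R}}=\phi\,d\mu/|\nabla\mathfrak{R}|$ is a legitimate finite measure with the claimed Fourier decay on the (possibly singular) variety $\{\mathfrak{R}=1\}$ — but this is supplied by the $\phi$-regularity assumption and the cited results of Birch and Magyar.
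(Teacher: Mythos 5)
Your single-scale arithmetic is right: a low piece with an $L^1\to L^\infty$ loss of $N$ and a high piece with an $L^2$ gain of $N^{-(c_{\mathfrak{R}}-1)}$ do optimize to the diagonal vertex $\left(\tfrac{2c_{\mathfrak{R}}-1}{2c_{\mathfrak{R}}},\tfrac{2c_{\mathfrak{R}}-1}{2c_{\mathfrak{R}}}\right)$, and these are exactly the two estimates the paper uses. But the paper deploys them only for the \emph{single-scale} operator: it proves the $L^p\to L^q$ improving bound of \Cref{Unique C-4-8} by feeding the low/high pieces into Bourgain's interpolation trick (\Cref{Unique C-4-9}), and then obtains the sparse bound for the lacunary maximal operator in one stroke from Oberlin's result (\Cref{Unique C-4-7}), which converts Fourier decay of the measure plus $L^p$-improving of a single average into a sparse bound for the dilation-maximal operator. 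You instead propose to run the stopping-time recursion and the high/low split simultaneously for the maximal operator, i.e.\ to reprove the content of \Cref{Unique C-4-7} by hand.

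That is where your sketch has a genuine gap: the claim that the supremum over the lacunary scales can be handled because ``the decay $c_{\mathfrak{R}}-1>1$ makes the geometric series in the lacunary index converge.'' With your decomposition the frequency cutoff sits at $\sim N/\lambda_k^{1/d}$ for the scale-$\lambda_k$ average, so the high-frequency piece at \emph{every} scale obeys the same bound $N^{-(c_{\mathfrak{R}}-1)}$; nothing decays in $k$. An $\ell^2$-sum over the scales $\lambda_k^{1/d}\lesssim \ell(Q)$ then costs a factor like $(\#\{k\})^{1/2}\sim(\log\ell(Q))^{1/2}$, which destroys the uniformity over cubes that a sparse bound requires. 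The decay exponent $c_{\mathfrak{R}}-1$ only produces summability when it is pitted against the \emph{ratio} of the input's frequency scale to $\lambda_k$ — e.g.\ by splitting $f$ into martingale/Littlewood--Paley pieces and using orthogonality, or via a continuity estimate $\|\mathcal{A}_{\lambda}(f-\tau_yf)\|$ with gain in $|y|/\lambda^{1/d}$, which is precisely the mechanism packaged inside \Cref{Unique C-4-7}. As written, your high-term step would fail; the simplest repair is to do what the paper does (prove \Cref{Unique C-4-8} and cite Oberlin), or else to insert the scale-coupled decomposition just described in place of the uniform-in-$k$ cutoff.
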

	The sparse bounds as above are indeed a consequence of the $L^p$-improving properties of single scale avarages . This observation was obtained in the work of \cite{Lacey2} and further used by Oberlin \cite{Oberlin} to obtain certain endpoint sparse bounds for Radon transforms. We also refer the readers to \cite{BRS}. We use the following result
	 \begin{lemma}\label{Unique C-4-7}\cite[Theorem 1.3]{Oberlin}
		 Suppose $\sigma$ is a finite measure supported on the unit ball with 
		\[|\widehat{\sigma}(\xi)|\lesssim |\xi|^{-\beta},\]
		for some $\beta>0$, and $1<p<q<\infty$ are exponents such that convolution with $\sigma$ is a bounded operator from $L^p$ to $L^q$. Then for every pair of compactly supported $f_1$, $f_2$ there is a sparse collection of cubes $\mathcal{Q}$ such that 
		 \[|\langle T^*f_1,f_2\rangle|\lesssim \sum\limits_{Q\in\mathcal{Q}}|Q|\langle f_1\rangle_{Q,p}\langle f_2\rangle_{3Q,q^{'}},\] with $T^*f(x)=\sup\limits_{j}\sigma_j*|f|(x)$, where $\sigma_j$ is defined as \[\int fd\sigma_j=\int f(2^{j}x)d\sigma(x).\]
	 \end{lemma}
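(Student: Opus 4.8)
The plan is to run the high--low decomposition and stopping-time recursion of Lacey \cite{Lacey2}. Fix a smooth Littlewood--Paley partition of unity $\sum_{\ell\ge0}\eta_\ell\equiv1$, with $\eta_\ell$ supported in $\{|\xi|\sim2^\ell\}$ for $\ell\ge1$ and $\eta_0$ supported in $\{|\xi|\lesssim1\}$; let $\Delta_\ell$ be the associated projection, set $\widehat{\sigma^{(\ell)}}=\eta_\ell\widehat\sigma$, and let $\sigma^{(\ell)}_j$ be the $2^j$-dilate of $\sigma^{(\ell)}$. Since the sparse form is unchanged on replacing $f_1,f_2$ by $|f_1|,|f_2|$, assume $f_1,f_2\ge0$; then $T^*f_1\le\sum_{\ell\ge0}T^{*,\ell}f_1$ with $T^{*,\ell}g:=\sup_j|\sigma^{(\ell)}_j*g|$. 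The $\ell=0$ term is dominated pointwise, uniformly in the dilation, by the Hardy--Littlewood maximal function, whose sparse $(1,1)$ bound implies the desired $(p,q')$ bound since $p>1$ and $q<\infty$. So it remains to bound each $T^{*,\ell}$, $\ell\ge1$, with an $\ell$-summable gain.

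For $\ell\ge1$ the two relevant inputs are: (i) the scaled $L^p\to L^q$ improving bound $\|\sigma^{(\ell)}_j*g\|_q\lesssim2^{jn(1/q-1/p)}\|g\|_p$, which follows from the hypothesis $\|\sigma*g\|_q\lesssim\|g\|_p$ via $\sigma^{(\ell)}*g=\sigma*(\Delta_\ell g)$ and $L^r$-boundedness of $\Delta_\ell$; and (ii) the $L^2$ maximal bound with geometric decay $\|T^{*,\ell}g\|_2\lesssim2^{-\beta\ell}\|g\|_2$, which comes from $|\widehat\sigma(\xi)|\lesssim|\xi|^{-\beta}$: on the frequency support of $\sigma^{(\ell)}_j$ one has $|\widehat{\sigma^{(\ell)}_j}|\lesssim2^{-\beta\ell}$, and as $j$ varies these supports are the almost disjoint annuli $\{|\xi|\sim2^{\ell-j}\}$, so $T^{*,\ell}g\le\big(\sum_j|\sigma^{(\ell)}_j*g|^2\big)^{1/2}$ and Plancherel gives $\|T^{*,\ell}g\|_2^2\lesssim\sum_j\|\sigma^{(\ell)}_j*g\|_2^2\lesssim2^{-2\beta\ell}\|g\|_2^2$. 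Interpolating (i) with (ii), both at a single scale and for the square function $\big(\sum_j|\sigma^{(\ell)}_j*g|^2\big)^{1/2}$, yields, at every interior improving pair between the pair in (i) and $(2,2)$, the same kind of improving bound now carrying a gain $2^{-\delta\ell}$, $\delta>0$.

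The sparse bound is produced by one stopping-time recursion in the manner of Kesler--Lacey--Mena \cite{Lacey1}. For a dyadic cube $Q$ with $f_1$ supported in $Q$ and $f_2$ in $3Q$, let $\mathcal B$ be the maximal dyadic $P\subsetneq Q$ with $\langle f_1\rangle_{P,p}>C\langle f_1\rangle_{Q,p}$ or $\langle f_2\rangle_{3P,q'}>C\langle f_2\rangle_{3Q,q'}$, so $\sum_{P\in\mathcal B}|P|\le\frac14|Q|$; adjoin $Q$ (with major subset $Q\setminus\bigcup\mathcal B$) to the sparse collection. Since for $x\in P\in\mathcal B$ and scales $2^j\le\ell(P)$ the value $\sigma_j*f_1(x)$ depends only on $f_1$ on $3P$, split $T^*f_1$ into the part at those scales, which recurses into the cubes $P$ with $f_1\mapsto f_1 1_{3P}$, $f_2\mapsto f_2 1_P$, and a principal part at the remaining scales. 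Writing the principal part as $\sum_\ell$ of the corresponding piece of $T^{*,\ell}$: the coarse scales $2^j>\ell(Q)$ sum geometrically by (i) (the factor $2^{jn(1/q-1/p)}$ decays because $q>p$), and for the band of scales up to $\ell(Q)$ one dominates the supremum by the square function and applies the interpolated bound with its $2^{-\delta\ell}$ gain; summing over $\ell$ gives $\langle\text{principal part},f_2\rangle\lesssim|Q|\langle f_1\rangle_{Q,p}\langle f_2\rangle_{3Q,q'}$, closing the recursion at interior exponents, and the stated endpoint $(p,q')$ then follows by the interpolation of sparse forms (cf.\ \cite[Lemma 4.1]{Lacey1}). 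The main difficulty is the bookkeeping in the principal part: one must arrange the Littlewood--Paley truncation so that the improving estimate survives it \emph{without loss} while the Fourier decay $|\widehat\sigma(\xi)|\lesssim|\xi|^{-\beta}$ simultaneously provides the gain needed both to collapse the band of intermediate scales (removing a logarithm) and to sum over $\ell$; once that balance is in place, the remaining estimates are routine.
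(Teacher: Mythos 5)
The paper does not prove this lemma; it is quoted verbatim as Theorem 1.3 of \cite{Oberlin}, so your proposal is an independent reproof. Its architecture --- Littlewood--Paley decomposition of $\sigma$, the scaled $L^p\to L^q$ bound (i), the $L^2$ bound (ii) with gain $2^{-\beta\ell}$ via almost orthogonality, the reduction of the $\ell=0$ piece to the Hardy--Littlewood maximal function, and a stopping-time recursion --- is indeed the Lacey--Oberlin template, and those individual ingredients are correct. The genuine gap is the step you dismiss as bookkeeping: passing from (i) and (ii) to a bound for the supremum over the band of scales $2^j\le\ell(Q)$ inside a stopping cube at an improving pair. The single-scale constants $2^{jn(1/q-1/p)}$ in (i) blow up as $j\to-\infty$, so they cannot be summed over the band; the scale-invariant (tiled, localized) form of (i) is uniform in $j$, but then summing over the scales active in $Q$ loses a factor of the number of such scales, which is unbounded (it is not a logarithm, as your parenthetical suggests). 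The square-function domination restores orthogonality only at $L^2$; at a pair away from $(2,2)$ you have no banded-maximal or vector-valued version of (i) to interpolate against (Minkowski's inequality for pulling the $L^q$ norm inside the square function needs $q\ge2$, and even then the resulting $j$-sum of the constants in (i) diverges). Thus the phrase ``applies the interpolated bound with its $2^{-\delta\ell}$ gain'' presupposes exactly the estimate that is missing; producing it --- for instance by linearizing the supremum with the stopping time and playing the disjointness of the level sets of the choice function against the Fourier-decay gain --- is the heart of the arguments in \cite{Lacey2} and \cite{Oberlin}, not routine bookkeeping.

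There is a second, structural problem: the lemma asserts the sparse bound at the exact pair $(p,q')$ at which the single-scale improving inequality holds; this endpoint feature is the point of Oberlin's theorem. Your scheme, even if the banded estimate were supplied in the lossy form you describe, balances a $(p,q')$-type bound carrying a loss against a $(2,2)$-type bound carrying a gain, which lands only at pairs strictly between $(1/p,1/q')$ and $(1/2,1/2)$; and the closing appeal to interpolation of sparse forms (Lemma 4.1 of \cite{Lacey1}) cannot repair this, since that lemma passes from two given pairs to intermediate ones, never outward to an endpoint. As written you would obtain at best an interior variant of the lemma --- incidentally sufficient for \Cref{Unique C-4-6}, which only claims the open triangle $V_n$, but strictly weaker than the statement under review. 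To reach $(p,q')$ itself one must apply the improving estimate at that exact pair with constants uniform in all auxiliary parameters and let the Fourier decay alone absorb both the scale sum and the frequency sum, which is precisely what Oberlin's proof is engineered to do.
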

	 In view of the lemma above, \Cref{Unique C-4-6} follows from the following estimates for single scale averages.
	 \begin{proposition}\label{Unique C-4-8}
	    The operator $\mathcal{A}_{1,\phi}^{\mathfrak{R}}$ is bounded from $L^p(\R^n)$ to $L^q(\R^n)$ for  $\left(\frac{1}{p},\frac{1}{q}\right)$ belonging to the triangle $D_n$ with vertices $(0,0)$, $\left(\frac{2c_{\mathfrak{R}}-1}{2c_{\mathfrak{R}}}, \frac{1}{2c_{\mathfrak{R}}}\right)$, and $(1,1)$.
	\end{proposition}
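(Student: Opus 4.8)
The plan is to obtain the three vertices of the triangle $D_n$ individually and then fill it in by interpolation, using nothing more than Plancherel, the decay bound $|\widehat{d\sigma}_{\mathfrak{R}}(\xi)|\lesssim(1+|\xi|)^{-(c_{\mathfrak{R}}-1)}$ quoted above from \cite{Magyar2}, and the fact that $d\sigma_{\mathfrak{R}}$ is a compactly supported finite measure carried by an $(n-1)$-dimensional hypersurface. Two of the vertices are free: since $\mathcal{A}_{1,\phi}^{\mathfrak{R}}f=f*d\sigma_{\mathfrak{R}}$ with $d\sigma_{\mathfrak{R}}$ finite, Young's inequality gives $\|\mathcal{A}_{1,\phi}^{\mathfrak{R}}f\|_{L^\infty}\lesssim\|f\|_{L^\infty}$ and $\|\mathcal{A}_{1,\phi}^{\mathfrak{R}}f\|_{L^1}\lesssim\|f\|_{L^1}$, which are the vertices $(0,0)$ and $(1,1)$.

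The content is at the third vertex $P_0=\left(\tfrac{2c_{\mathfrak{R}}-1}{2c_{\mathfrak{R}}},\tfrac{1}{2c_{\mathfrak{R}}}\right)$, and I would reach it by a frequency-space dyadic decomposition of the measure. Fix a Littlewood--Paley resolution $1=\widehat{\varphi_0}(\xi)+\sum_{j\ge1}\psi(2^{-j}\xi)$ with $\psi$ supported in $\{1/2\le|\xi|\le2\}$, and set $\widehat{K_j}(\xi)=\widehat{d\sigma_{\mathfrak{R}}}(\xi)\psi(2^{-j}\xi)$ for $j\ge1$ (the low-frequency piece $K_0$ is Schwartz and harmless). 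By Plancherel and the Fourier decay,
\[\|K_j*f\|_{L^2}\lesssim 2^{-j(c_{\mathfrak{R}}-1)}\|f\|_{L^2}.\]
Writing instead $K_j=d\sigma_{\mathfrak{R}}*\eta_j$ with $\eta_j=(\psi(2^{-j}\cdot))^\vee$ a Schwartz bump concentrated at scale $2^{-j}$ with $\|\eta_j\|_{L^1}\lesssim1$, and using the ball-growth bound $\sigma_{\mathfrak{R}}(B(x,\rho))\lesssim\rho^{n-1}$ for an $(n-1)$-dimensional measure, one gets $\|K_j\|_{L^\infty}\lesssim 2^{jn}\cdot 2^{-j(n-1)}=2^{j}$, hence
\[\|K_j*f\|_{L^\infty}\lesssim 2^{j}\|f\|_{L^1}.\]
Riesz--Thorin interpolation between these two bounds gives $\|K_j*f\|_{L^q}\lesssim 2^{j(\theta-(c_{\mathfrak{R}}-1)(1-\theta))}\|f\|_{L^p}$ along the segment $\tfrac1p=\tfrac12+\tfrac\theta2$, $\tfrac1q=\tfrac{1-\theta}{2}$; the exponent is negative exactly for $\theta<1-\tfrac1{c_{\mathfrak{R}}}$, and at $\theta=1-\tfrac1{c_{\mathfrak{R}}}$ the pair $(\tfrac1p,\tfrac1q)$ is precisely $P_0$. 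So summing the geometric series in $j$ yields $\mathcal{A}_{1,\phi}^{\mathfrak{R}}:L^p\to L^q$ boundedly for every $(\tfrac1p,\tfrac1q)$ on the open segment from $(\tfrac12,\tfrac12)$ toward $P_0$. Since the set of $(\tfrac1p,\tfrac1q)$ for which $\mathcal{A}_{1,\phi}^{\mathfrak{R}}$ is $L^p\to L^q$ bounded is convex (Riesz--Thorin), combining these points with the two trivial vertices fills the open triangle $D_n$; the closed vertex $P_0$ itself, if one wants it, can be recovered from the oscillatory (not merely size) structure of $\widehat{d\sigma}_{\mathfrak{R}}$ or a routine $\varepsilon$-removal, but the open triangle is already all that is used in Lemma~\ref{Unique C-4-7}.

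The hard part is the $L^1\to L^\infty$ estimate $\|K_j\|_{L^\infty}\lesssim2^{j}$ on the frequency-localized pieces: this is the only place where the geometry of the variety $\{\mathfrak{R}=1\}$ genuinely enters, through the $(n-1)$-dimensional ball-growth bound for $d\sigma_{\mathfrak{R}}$ on $\mathrm{supp}\,\phi$. One must check that $d\sigma_{\mathfrak{R}}$, defined with the weight $|\nabla\mathfrak{R}|^{-1}$, really does satisfy this bound (including near any singular points of the variety meeting $\mathrm{supp}\,\phi$); this is available from the $\phi$-regular structure as in \cites{Magyar2,Cook&Hughes1}. Everything past that point is soft --- Plancherel, a geometric series, and convexity of the mapping properties.
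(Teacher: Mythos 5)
Your argument is correct, and it runs on exactly the same two estimates as the paper: Fourier decay $|\widehat{d\sigma_{\mathfrak{R}}}(\xi)|\lesssim(1+|\xi|)^{-(c_{\mathfrak{R}}-1)}$ for the $L^2$ piece, and the concentration/dimensionality of $d\sigma_{\mathfrak{R}}$ for the $L^1\to L^\infty$ piece. The ingredient you flag as the ``hard part'' --- that $\|K_j\|_{L^\infty}\lesssim 2^{j}$, i.e. $\sigma_{\mathfrak{R}}(B(x,\rho))\lesssim\rho^{n-1}$ despite the weight $|\nabla\mathfrak{R}|^{-1}$ --- is precisely what the paper also imports rather than proves, in the equivalent form $|\zeta_t\star d\sigma_{\mathfrak{R}}(x)|\lesssim t^{-1}(1+|x|/t)^{-2n}$ quoted from \cite{Hughes}*{p.~970} (and the Grafakos argument for the sphere), so flagging it instead of claiming it was the right call. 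Where you differ is the packaging of the interpolation: you perform a Littlewood--Paley decomposition of the measure, apply Riesz--Thorin to each dyadic piece and sum a geometric series, which yields strong-type bounds on the open segment toward the vertex and hence (by convexity with the trivial points $(0,0)$, $(1,1)$) the open triangle $D_n$; the paper instead makes a single low/high split at frequency scale $N$ and feeds the two bounds $N\|f\|_{L^1}$ and $N^{1-c_{\mathfrak{R}}}\|f\|_{L^2}$ into Bourgain's interpolation trick (\Cref{Unique C-4-9}), which produces the restricted weak-type bound $L^{\frac{2c_{\mathfrak{R}}}{2c_{\mathfrak{R}}-1},1}\to L^{2c_{\mathfrak{R}},\infty}$ exactly at the vertex, after which the interior follows by interpolation. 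So the paper's route buys a Lorentz-space endpoint at $P_0$ that your geometric-series argument gives up, while yours avoids invoking the Bourgain lemma; since \Cref{Unique C-4-7} only consumes $L^p\to L^q$ boundedness at interior points (and the paper itself only asserts strong type in the interior of $D_n$), the loss is immaterial for the application. Your parenthetical suggestion that the vertex itself could be recovered by ``$\varepsilon$-removal or oscillation'' is unsubstantiated and should be dropped, but nothing rests on it.
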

	   Note that, the previously defined triangle $S_n$ is strictly contained inside this triangle $V_n$. Proof of \Cref{Unique C-4-8} uses Bourgain's interpolation trick. We state Lemma 2.6 from \cite{Sanghyuk} for convenience.
	 \begin{lemma}[\cite{Sanghyuk}]\label{Unique C-4-9}
		 Let $\epsilon_1,\epsilon_2>0$. Suppose that $\{T_j\}$ is a sequence of linear (or sublinear) operators 
		 such that for some $1\leq p_1,p_2<\infty$, and $1\leq q_1,q_2<\infty$, 
		 $$\Vert T_{j}(f)\Vert_{L^{q_1}}\leq M_12^{\epsilon_1 j}\Vert f\Vert_{L^{p_1}},~~\Vert T_{j}(f)\Vert_{L^{q_2}}\leq M_22^{-\epsilon_2 j}\Vert f\Vert_{L^{p_2}}.$$
		 Then $T=\sum_jT_j$ is bounded from $L^{p,1}$ to $L^{q,\infty}$, i.e. 
		 $$\Vert T(f)\Vert_{L^{q,\infty}}\lesssim M^{\theta}_{1}M^{1-\theta}_{2}\Vert f\Vert_{L^{p,1}},$$	
		 where $\theta=\frac{\epsilon_2}{\epsilon_1+\epsilon_2}$, $\frac{1}{q}=\frac{\theta}{q_1}+\frac{1-\theta}{q_2}$ 
		 and $\frac{1}{p}=\frac{\theta}{p_1}+\frac{1-\theta}{p_2}$.
	 \end{lemma}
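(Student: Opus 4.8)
The plan is to establish \Cref{Unique C-4-9} by the classical two-step scheme behind Bourgain's interpolation trick: first reduce the desired $L^{p,1}\to L^{q,\infty}$ bound to a \emph{restricted} weak-type estimate on indicator functions, and then prove that estimate by a dyadic decomposition of the index $j$, followed by Chebyshev's inequality and optimization over the cut-off. The point of the reduction is that replacing a general $f$ by an indicator $1_E$ is precisely what allows the two mutually incompatible quantities $\|f\|_{L^{p_1}}$ and $\|f\|_{L^{p_2}}$ to be merged into common powers of a single number $|E|$.

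\emph{Step 1: reduction to indicators.} Since $T$ is sublinear, it suffices to show
\[\|T1_{E}\|_{L^{q,\infty}}\lesssim M_1^{\theta}M_2^{1-\theta}\,|E|^{1/p}\qquad\text{for all measurable }E\text{ with }|E|<\infty.\]
This is enough because, for $q>1$, the functional $\|g\|^{*}=\sup_{0<|F|<\infty}|F|^{1/q-1}\int_{F}|g|$ is an equivalent norm on $L^{q,\infty}$ and is countably subadditive, while any $f\in L^{p,1}$ ($1\le p<\infty$) admits the standard atomic decomposition $f=\sum_{k}c_{k}1_{E_{k}}$ (say $E_{k}=\{|f|>2^{k}\}$) with $\sum_{k}|c_{k}|\,|E_{k}|^{1/p}\lesssim\|f\|_{L^{p,1}}$; then $|Tf|\le\sum_{k}|c_{k}|\,|T1_{E_{k}}|$ combined with the two facts gives $\|Tf\|_{L^{q,\infty}}\lesssim M_1^{\theta}M_2^{1-\theta}\|f\|_{L^{p,1}}$. (The hypothesis $q>1$ is automatic in our applications; otherwise one uses a routine variant of this decomposition.)

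\emph{Steps 2 and 3: dyadic splitting, Chebyshev, and optimization.} Fix $E$, put $f=1_{E}$, and fix $\alpha>0$. For an integer $J$ to be chosen, write $T=\sum_{j\le J}T_{j}+\sum_{j>J}T_{j}$. Summing the two hypothesized bounds over $j$ as geometric series — which converge since $\epsilon_{1},\epsilon_{2}>0$ — gives
\[\Big\|\sum_{j\le J}T_{j}f\Big\|_{L^{q_1}}\lesssim M_1\,2^{\epsilon_1 J}\,|E|^{1/p_1},\qquad\Big\|\sum_{j>J}T_{j}f\Big\|_{L^{q_2}}\lesssim M_2\,2^{-\epsilon_2 J}\,|E|^{1/p_2}.\]
Since $\{|Tf|>2\alpha\}\subseteq\{|\sum_{j\le J}T_{j}f|>\alpha\}\cup\{|\sum_{j>J}T_{j}f|>\alpha\}$, Chebyshev's inequality yields
\[\big|\{|Tf|>2\alpha\}\big|\lesssim \alpha^{-q_1}M_1^{q_1}2^{\epsilon_1 q_1 J}|E|^{q_1/p_1}+\alpha^{-q_2}M_2^{q_2}2^{-\epsilon_2 q_2 J}|E|^{q_2/p_2}.\]
Now choose $J$ to be the nearest integer to the value balancing these two terms, i.e.\ $2^{(\epsilon_1 q_1+\epsilon_2 q_2)J}\approx\alpha^{q_1-q_2}M_2^{q_2}M_1^{-q_1}|E|^{q_2/p_2-q_1/p_1}$ (the rounding costs only the constants $2^{\epsilon_1 q_1},2^{\epsilon_2 q_2}$). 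Writing $w=\frac{\epsilon_1 q_1}{\epsilon_1 q_1+\epsilon_2 q_2}$, the exponent of $\alpha$ in $\alpha^{q}|\{|Tf|>2\alpha\}|$ becomes $q-q_1+w(q_1-q_2)$; the relation $\theta=\frac{\epsilon_2}{\epsilon_1+\epsilon_2}$ together with $\frac1q=\frac{\theta}{q_1}+\frac{1-\theta}{q_2}$ forces $w=\frac{q_1-q}{q_1-q_2}$, so that exponent vanishes. The same relations give $(1-w)q_1=\theta q$ and $wq_2=(1-\theta)q$ for the exponents of $M_1$ and $M_2$, and, adding $\frac1p=\frac{\theta}{p_1}+\frac{1-\theta}{p_2}$, the exponent $(1-w)\frac{q_1}{p_1}+w\frac{q_2}{p_2}$ of $|E|$ equals $\frac qp$. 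Hence $\alpha^{q}|\{|Tf|>2\alpha\}|\lesssim M_1^{\theta q}M_2^{(1-\theta)q}|E|^{q/p}$ uniformly in $\alpha>0$, which is exactly $\|T1_{E}\|_{L^{q,\infty}}\lesssim M_1^{\theta}M_2^{1-\theta}|E|^{1/p}$; together with Step 1 this completes the proof.

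The main obstacle is the bookkeeping of the last step: one must check that the three stated relations among $\theta,p,q,p_i,q_i,\epsilon_i$ are \emph{precisely} what makes the $\alpha$-dependence cancel at the optimal cut-off while the exponents on $M_1,M_2$ come out as $\theta$ and $1-\theta$. Everything else is routine — the convergence of the geometric sums (which is why the hypotheses are arranged with the signs $+\epsilon_1$ and $-\epsilon_2$), the harmless integer rounding of $J$, and the Lorentz-space facts invoked in Step 1.
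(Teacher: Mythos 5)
The paper never proves this statement: it is quoted verbatim as Lemma 2.6 of \cite{Sanghyuk}, so there is no internal argument to compare against, and supplying a proof is exactly what a self-contained write-up would need. Your proof is the standard argument behind Bourgain's interpolation trick and it is correct. I checked the bookkeeping in your final step: with $w=\frac{\epsilon_1 q_1}{\epsilon_1 q_1+\epsilon_2 q_2}$ one has $(1-w)q_1+wq_2=\frac{(\epsilon_1+\epsilon_2)q_1q_2}{\epsilon_1 q_1+\epsilon_2 q_2}=q$, $(1-w)q_1=\theta q$, $wq_2=(1-\theta)q$, and $(1-w)\frac{q_1}{p_1}+w\frac{q_2}{p_2}=\frac{q}{p}$, so the $\alpha$-powers cancel and the powers of $M_1$, $M_2$, $|E|$ come out as claimed. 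Two points deserve explicit mention rather than a parenthesis: first, summing the two tails in $L^{q_1}$ and $L^{q_2}$ uses the triangle inequality, which is where the hypotheses $q_1,q_2\geq 1$ enter; second, the reduction to indicators in your Step 1 genuinely requires $q>1$ (equivalently, the normability of $L^{q,\infty}$ via $\sup_F|F|^{1/q-1}\int_F|g|$), since restricted weak type does not upgrade to an $L^{p,1}\to L^{1,\infty}$ bound for general sublinear operators; the degenerate case $q=1$ forces $q_1=q_2=1$ and would need a separate treatment, but in the only application made in this paper (the proof of \Cref{Unique C-4-8}) one has $q=2c_{\mathfrak{R}}>4$, so the caveat is harmless. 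Finally, your implicit constants depend on $\epsilon_1,\epsilon_2,q_1,q_2$ through the geometric series and the rounding of $J$, which is consistent with the statement as used here.
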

	\begin{proof}[\textbf{Proof of \Cref{Unique C-4-8}}]
	Since $\|d\bar{\sigma}_{\mathfrak{R},1}\|_1\lesssim1$, it follows that the operator $\mathcal{A}_{1,\phi}^{\mathfrak{R}}$ is bounded in $L^p$ for $p=1$ and $p=\infty$. Next, we prove that $\mathcal{A}_{1,\phi}^{\mathfrak{R}}$ is bounded from $L^{\frac{2c_{\mathfrak{R}}}{2c_{\mathfrak{R}}-1},1}$ to $L^{2c_{\mathfrak{R}},\infty}$.  Note that the desired $L^p\rightarrow L^q$-boundedness of $\mathcal{A}_{1,\phi}^{\mathfrak{R}}$ in the interior of the triangle $D_n$ follows by interpolation. \\
	
	Let $\psi$ be a Schwartz function such that 
		$$\widehat{\psi}(\xi)=
		 \begin{cases}
			 1 , & \text{if} \hspace{2mm} |\xi| \leq 1, \\
	
			 0 , & \text{if} \hspace{2mm} |\xi| \geq 2.
		 \end{cases}$$ Define $\widehat{\psi_{N^{-1}}}(\xi)=\widehat{\psi}\left(\frac{\xi}{N}\right)$ and write  $\widehat{f}(\xi)\widehat{d\bar{\sigma}_{\mathfrak{R},1}}(\xi)$ as \[\widehat{f}(\xi)\widehat{d\bar{\sigma}_{\mathfrak{R},1}}(\xi)=\widehat{f}(\xi)\widehat{\psi_{N^{-1}}}(\xi)\widehat{d\bar\sigma_{\mathfrak{R},1}}(\xi)+\widehat{f}(\xi)(1-\widehat{\psi_{N^{-1}}}(\xi))\widehat{d\bar\sigma_{\mathfrak{R},1}}(\xi).\]
		 Notice that, $\psi_{N^{-1}}*{d\bar\sigma}_{\mathfrak{R},1}(x)$ is bounded by $\frac{N}{(1+|x|)^M}$. If $\mathfrak{R}(x):=\sum\limits_{i=1}^{n}|x_i|^2$,  a proof of this can be found in \cite[Lemma 6.5.3]{Grafakos1}. Further, since the proof depends on the dimensionality of the measure, this estimate is valid for the measure $\bar{d\sigma}_{\mathfrak{R},1}$ over the variety $\{x\in\R^n:\mathfrak{R}(x)=1\}$, where $\mathfrak{R}$ is any integral form of degree $d$. We refer to \cite[p 970]{Hughes}, where this estimate is used to bound the averages.
		 Consider, 
		 \begin{align*}
	\|f*\psi_{N^{-1}}*{d\bar\sigma}_{\mathfrak{R},1}\|_{L^\infty}&\lesssim\|\psi_{N^{-1}}*{d\bar\sigma}_{\mathfrak{R},1}\|_{L^\infty}\|f\|_{L^1}\\&\lesssim N\|f\|_{L^1}.
		 \end{align*}
		Next, using the support of $\widehat{\psi_{N^{-1}}}(\xi)$ and the bound of $\widehat{d\bar\sigma_{\mathfrak{R},1}}(\xi)$ mentioned earlier, we get that 
		\begin{align*}
			\|f*(1-\psi_{N^{-1}})*d\bar\sigma_{\mathfrak{R},1}\|		_{L^2}&=\|\widehat{f}(\xi)(1-\widehat{\psi_{N^{-1}}}(\xi))\widehat{d\bar\sigma_{\mathfrak{R},1}}(\xi)\|_{L^2}\\
			&\lesssim\sup\limits_{\xi}|(1-\widehat{\psi_{N^{-1}}}(\xi))\widehat{d\bar\sigma_{\mathfrak{R},1}}(\xi)|\|f\|_{L^2}\\
			&\lesssim\frac{1}{(1+|\xi|)^{c_{\mathfrak{R}}-1}}|1-\widehat{\psi_{N^{-1}}}(\xi)|\|f\|_{L^2}\\
			&\lesssim N^{1-c_{\mathfrak{R}}}\|f\|_{L^2}.
		\end{align*} 
		Using \Cref{Unique C-4-9}, it follows that
		\[\|\mathcal{A}_{1,\phi}^{\mathfrak{R}}f\|_{L^{2c_{\mathfrak{R}},\infty}}\lesssim\|f\|_{L^{\frac{2c_{\mathfrak{R}}}{2c_{\mathfrak{R}}-1},1}}.\]
		This completes the proof.
	\end{proof}	
	\subsection*{Acknowledgement}
	Ankit Bhojak is supported by the Science and Engineering Research Board, Department of Science and Technology, Govt. of India, under the scheme National Post-Doctoral Fellowship, file no. PDF/2023/000708. Surjeet Singh Choudhary is supported by IISER Mohali for postdoctoral fellowship. Siddhartha Samanta is supported by IISER Bhopal for PhD fellowship. Saurabh Shrivastava acknowledges the financial support from Science and Engineering Research Board, Govt. of India, under the scheme Core Research Grant, file no. CRG/2021/000230.
	\bibliography{bibliography}
\end{document}